\theoremstyle:=definition,remark,plain\do{%
 \expandafter\g@addto@macro\csname th@\theoremstyle\endcsname{%
 \addtolength\thm@preskip\parskip
 }%
 }
\declaretheorem[name=Theorem,numberwithin=section]{thm}
\declaretheorem[name=Proposition,numberlike=thm]{prop}
\declaretheorem[name=Lemma,numberlike=thm]{lemma}
\declaretheorem[name=Corollary,numberlike=thm]{cor}
\declaretheorem[name=Definition,style=definition,qed=$\blacktriangle$,numberlike=thm]{defn}
\declaretheorem[name=Remark,style=definition,qed=$\blacktriangle$,numberlike=thm]{rmk}
\newcounter{noteCounter}
\DeclareMathOperator\vol{vol}
\newcommand{\imag}{\operatorname{Im}}
\newcommand{\G}{\mathrm{G}_2}
\newcommand{\GL}[1]{\mathrm{GL}(#1)}
\newcommand{\SO}[1]{\mathrm{SO}(#1)}
\newcommand{\so}{\mathfrak{so}}
\newcommand{\R}{\mathbb R}
\newcommand{\Qu}{\mathbb H}
\newcommand{\Oc}{\mathbb O}
\newcommand{\cB}{\mathcal{B}}
\newcommand{\ph}{\varphi}
\newcommand{\ps}{\psi}
\newcommand{\sta}{\star}
\newcommand{\hk}{\mathbin{\! \hbox{\vrule height0.3pt width5pt depth 0.2pt \vrule height5pt width0.4pt depth 0.2pt}}}
\newcommand{\lieg}{\mathfrak {g}_2}
\newcommand{\im}{\operatorname{im}}
\newcommand{\rank}{\operatorname{rank}}
\newcommand{\rest}[2]{ {\left. {#1} \right|}_{{#2} }}
\newcommand{\wt}{\widetilde}
\newcommand{\w}{\wedge}
\let\c@equation\c@thm
\numberwithin{equation}{section}
\begin{document}

\title{Observations about the Lie algebra $\lieg \subset \so(7)$, \\
associative $3$-planes, and $\so(4)$ subalgebras}

\author{Max Chemtov \\ \emph{Department of Mathematics and Statistics, McGill University} \\ \tt{max.chemtov@mail.mcgill.ca} \and Spiro Karigiannis \\ \emph{Department of Pure Mathematics, University of Waterloo} \\ \tt{karigiannis@uwaterloo.ca}}

\maketitle

\begin{abstract}
We make several observations relating the Lie algebra $\lieg \subset \so(7)$, associative $3$-planes, and $\so(4)$ subalgebras. Some are likely well-known but not easy to find in the literature, while other results are new. We show that an element $X \in \lieg$ cannot have rank $2$, and if it has rank $4$ then its kernel is an associative subspace. We prove a canonical form theorem for elements of $\lieg$. Given an associative $3$-plane $P$ in $\R^7$, we construct a Lie subalgebra $\Theta(P)$ of $\so(7) = \Lambda^2 (\R^7)$ that is isomorphic to $\so(4)$. This $\so(4)$ subalgebra differs from other known constructions of $\so(4)$ subalgebras of $\so(7)$ determined by an associative $3$-plane. These are results of an NSERC undergraduate research project. The paper is written so as to be accessible to a wide audience.
\end{abstract}

\tableofcontents

\section{Introduction} \label{introsec}

The space $\R^7$ admits a special algebraic structure called a ``$\G$-structure'', which essentially consists of the standard inner product, orientation, and a \emph{cross product} operation $(v, w) \mapsto v \times w$ satisfying certain properties. The subgroup of $\GL{7, \R}$ preserving this structure is the exceptional Lie group $\G$, embedded in a natural way inside $\SO{7}$. Since $\so(7) = \Lambda^2 (\R^7)$, this yields a decomposition $\Lambda^2 (\R^7) = \lieg \oplus \lieg^{\perp}$. Moreover, the $\G$-structure on $\R^7$ determines a distinguished class of $3$-dimensional subspaces of $\R^7$, called \emph{associative} subspaces. In this paper we investigate the relations between associative subspaces, the Lie algebra $\lieg$, and certain $\so(4)$ subalgebras of $\Lambda^2 (\R^7)$.

Some of the results in Section~\ref{sec:g2} are likely well-known to experts, but are not easy to find explicitly in the literature. For example, we show that an element $X$ of $\lieg$ can never have rank $2$, and if it has rank $4$ then its kernel is an associative subspace. We also derive a general canonical form theorem (Theorem~\ref{thm:g2-canonical-form}) for elements of $\lieg$, which is essentially the maximal torus theorem, at the Lie algebra level, for $\lieg$. However, we give a completely self-contained proof using the cross product and its properties, avoiding the use of any abstract Lie theory. See Remark~\ref{rmk:maximal-torus} for more details. This canonical form theorem has been recently applied to study Betti numbers of nearly $\G$ manifolds. See Remark~\ref{rmk:anton} for more details.

The results of Section~\ref{sec:assoc-so4} appear to be new. Given an associative subspace $P$ of $\R^7$, we construct an $\so(4)$ subalgebra $\Theta(P)$ of $\Lambda^2 (\R^7)$. We also investigate the intersections $\Theta(P) \cap \Theta(Q)$ for two distinct associative subspaces $P, Q$. These results are summarized in Theorem~\ref{thm:so4-final}. The $\so(4)$ subalgebra $\Theta(P)$ determined by an associative subspace $P$ constructed in the present paper is \emph{not the same} as the standard embedding of $\so(4)$ in $\lieg$ determined by $P$, because $\Theta(P)$ \emph{does not} lie in $\lieg$. It is also \emph{not the same} as the $\so(4)$ subalgebra obtained from $\Lambda^2 (P^{\perp})$. See Remark~\ref{rmk:different-so4} for more details.

\textbf{Notation and conventions.} We equip $\R^7$ with the standard inner product $\langle \cdot, \cdot \rangle$ and orientation. Using the inner product, we identify $\R^7$ with its dual space in the usual way throughout without further mention. Thus all of our indices are subscripts. In particular, a bilinear form $A$ on $\R^7$ is identified with a linear operator $A \colon \R^7 \to \R^7$ by $A(u,v) = \langle A(u), v \rangle$. Under this identification a symmetric bilinear form (respectively, skew-symmetric bilinear form) corresponds to a self-adjoint operator (respectively, skew-adjoint operator). For example, if $u, v \in \R^7$, then the $2$-form $u \w v$ corresponds to the skew-adjoint linear map
\begin{equation} \label{eq:2form-map}
(u \w v) w = \langle u, w \rangle v - \langle v, w \rangle u.
\end{equation}

We only work with orthonormal bases $\{ e_1, \ldots, e_7 \}$, so the matrix representing a bilinear form is the same as the matrix representing the associated linear operator, namely $A_{ij} = A(e_i, e_j) = \langle A(e_i), e_j \rangle$. In index notation with respect to an orthonormal frame, we sum over repeated indices from $1$ to $7$. We write $\Lambda^k$ for $\Lambda^k (\R^7)$, the space of $k$-forms.

\textbf{Acknowledgements.} A significant portion of this research was conducted in summer 2021, while the first author held an NSERC Undergraduate Student Research Award under the supervision of the second author. The work was completed in 2022 by the second author, under the support of an NSERC Discovery Grant. The second author also thanks Shengda Hu and Anton Iliashenko for useful conversations. Both authors are grateful to the anonymous referee for finding typographical errors in the original draft.

\section{Preliminaries} \label{sec:preliminaries}

In this section we review the standard ``$\G$ package'' on $\R^7$, including the notion of associative subspaces, and discuss without proofs their basic properties. A good reference for some of this material, with similar conventions and notation, is~\cite{K-intro}. Another useful resource is~\cite{SW}. A representation-theoretical presentation is given in~\cite{Bryant}, and a nice historical survey of $\G$ is given in~\cite{A}. For more details on associative subspaces, see~\cite{Harvey, HL, Leung, Lotay}. However, as long as the reader is willing to accept the facts presented in this section, then the rest of the paper is essentially completely self-contained.

\subsection{Review of the standard $\G$-structure on $\R^7$} \label{sec:review}

Recall that the \emph{octonions} $\Oc$ are an $8$-dimensional non-associative real normed unital algebra. As a vector space, $\Oc = \R^8$, equipped with its standard inner product and orientation. As a normed unital algebra, $\Oc$ is equipped with a bilinear (non-associative) multiplication $(a, b) \mapsto ab$ satisfying $| ab | = |a| \, |b|$ where $| \cdot |$ is the norm induced from the inner product. The orthogonal complement of the span of the multiplicative identity $1$ is called the \emph{imaginary} octonions and is denoted $\imag \Oc$. Thus $\imag \Oc = \R^7$ inherits the Euclidean inner product and we give it the induced orientation such that $1 \w \vol_{\imag \Oc} = \vol_{\Oc}$.

The octonion multiplication induces a \emph{cross product} operation $\times \colon \R^7 \times \R^7 \to \R^7$ by
$$ u \times v = \imag (uv), \qquad \text{for $u, v \in \R^7 = \imag \Oc$.} $$
The cross product satisfies
$$ u \times v = - v \times u, \qquad \langle u \times v, u \rangle = 0, \qquad | u \times v |^2 = | u \w v |^2. $$
From the cross product, we obtain a $3$-form $\ph \in \Lambda^3$ given by
\begin{equation} \label{eq:ph-cross}
\ph(u, v, w) = \langle u \times v, w \rangle.
\end{equation}
From~\eqref{eq:ph-cross}, we deduce immediately that with respect to an orthonormal frame we have
\begin{equation} \label{eq:cross-frame}
(u \times v)_k = u_p v_q \ph_{pqk} = (v \hk u \hk \ph)_k.
\end{equation}
The Hodge star operator $\sta \colon \Lambda^k \to \Lambda^{7-k}$ then gives us a $4$-form $\ps \in \Lambda^4$ by
$$ \ps = \sta \ph. $$
Using the inner product, $\ps$ can also be thought of as a vector-valued $3$-form by
$$ \langle \ps(u, v, w), y \rangle = \ps(u,v,w,y). $$
That is, we ``raise the last index'' of $\ps$. (This vector valued $3$-form is denoted $\chi$ by some authors. We choose to call it $\ps$ to avoid proliferation of notation. Similarly we could write $u \times v = \ph(u,v)$ if we raise the last index of $\ph$ to regard it as a vector-valued $2$-form.)

Iterating the cross product, one obtains the relation
\begin{equation} \label{eq:iterated-cross}
u \times (v \times w) = - \langle u, v \rangle w + \langle u, w \rangle v + \ps(u, v, w).
\end{equation}
A special case occurs when $v = u$, for which we have
\begin{equation} \label{eq:iterated-cross-same}
u \times (u \times w) = - |u|^2 w + \langle u, w \rangle u.
\end{equation}
Moreover, it follows from~\eqref{eq:iterated-cross} and the skew-symmetry of $\ps$ that
\begin{equation} \label{eq:iterated-cross-orthog}
u \times (v \times w) \qquad \text{is totally skew-symmetric if $u, v, w$ are orthogonal.}
\end{equation}
In index notation with respect to an orthonormal frame, the fundamental identity~\eqref{eq:iterated-cross} is
\begin{equation} \label{eq:fundamental}
\ph_{ijp} \ph_{abp} = \delta_{ia} \delta_{jb} - \delta_{ib} \delta_{ja} - \ps_{ijab}.
\end{equation}
By taking various contractions of this identity with itself, one can establish (see~\cite{K-flows} for proofs) the following important identities for contractions of $\ph$ with $\ps$:
\begin{equation}
\begin{aligned} \label{eq:phps}
\ph_{ijp} \ps_{abcp} & = \delta_{ia} \ph_{jbc} + \delta_{ib} \ph_{ajc} + \delta_{ic} \ph_{abj} - \delta_{ja} \ph_{ibc} - \delta_{jb} \ph_{aic} - \delta_{jc} \ph_{abi}, \\
\ph_{ipq} \ps_{abpq} & = - 4 \ph_{iab},
\end{aligned}
\end{equation}
and for contractions of $\ps$ with $\ps$:
\begin{equation}
\begin{aligned} \label{eq:psps}
\ps_{ijkp} \ps_{abcp} & = - \ph_{ajk} \ph_{ibc} - \ph_{iak} \ph_{jbc} - \ph_{ija} \ph_{kbc} \\
& \qquad {} + \delta_{ia} \delta_{jb} \delta_{kc} + \delta_{ib} \delta_{jc} \delta_{ka} + \delta_{ic} \delta_{ja} \delta_{kb} - \delta_{ia} \delta_{jc} \delta_{kb} - \delta_{ib} \delta_{ja} \delta_{kc} - \delta_{ic} \delta_{jb} \delta_{ka} \\
& \qquad {} - \delta_{ia} \ps_{jkbc} - \delta_{ja} \ps_{kibc} - \delta_{ka} \ps_{ijbc} + \delta_{ab} \ps_{ijkc} - \delta_{ac} \ps_{ijkb}, \\
\ps_{ijpq} \ps_{abpq} & = 4 \delta_{ia} \delta_{jb} - 4 \delta_{ib} \delta_{ja} - 2 \ps_{ijab}.
\end{aligned}
\end{equation}

The $\G$-structure $\ph$ on $\R^7$ determines distinguished classes of $3$-dimensional subspaces.

\begin{defn} \label{defn:assoc}
A $3$-dimensional subspace $A$ of $\R^7$ is called \emph{associative} if it is closed under the cross product. That is, if $u, v \in A$ implies $u \times v \in A$.
\end{defn}

Using the properties of the cross product, we see easily that if $\{ e_1, e_2, e_3 \}$ is an orthonormal basis of $A$, then $A$ is associative if and only if $e_i \times e_j = \pm e_k$ for $i, j, k$ distinct. Equivalently, $A$ is associative if and only if $A$ admits an orientation such that $\rest{\ph}{A} = \vol_A$. In this case, if $\{ e_1, e_2, e_3 \}$ is an oriented orthonormal basis of $A$, then $e_i \times e_j = e_k$ whenever $i,j,k$ is a cyclic permutation of $1,2,3$. It is also true that
\begin{equation} \label{eq:assoc-character}
\text{$A$ is associative if and only if $\ps(u,v,w) = 0$ for all $u, v, w \in A$.}
\end{equation}
(This can be seen from~\eqref{eq:iterated-cross} by letting $u,v,w$ be orthonormal and using the fact that $u \times y = 0$ if and only if $u, y$ are parallel.)

The exceptional Lie group $\G$ can be defined as the stabilizer in $\GL{7, \R}$ of the $3$-form $\ph$. It is a classical fact that, from this definition, it follows that $\G \subseteq \SO{7}$. (See for example~\cite[Theorem 4.4]{K-intro} for a proof.) In particular, this means that elements of $\G$ preserve the inner product and the orientation on $\R^7$. The columns of a matrix $P \in \SO{7}$ are an oriented orthonormal frame $\{ e_1, \ldots, e_7 \}$ of $\R^7$. Those matrices $P \in \G \subseteq \SO{7}$ are further distinguished by the relations
\begin{equation} \label{eq:G2-adapted}
e_3 = e_1 \times e_2, \qquad e_5 = e_1 \times e_4, \qquad e_6 = e_2 \times e_4, \qquad e_7 = e_3 \times e_4 = (e_1 \times e_2) \times e_4.
\end{equation}
An oriented orthonormal frame $\{e_1, \ldots, e_7\}$ for $\R^7$ is called a \emph{$\G$-adapted frame} if it satisfies~\eqref{eq:G2-adapted}. Such frames are precisely the images of the standard basis of $\R^7$ by elements of $\G$.

From the octonion multiplication table and the definitions of $\ph$ and $\ps$, one can show that with respect to a $\G$-adapted frame $\{ e_1, \ldots, e_7 \}$, and writing $e_{ijk} = e_i \w e_j \w e_k$, we have
\begin{equation} \label{eq:forms-coordinates}
\begin{aligned}
\ph & = e_{123} - e_{167} - e_{527} - e_{563} - e_{415} - e_{426} - e_{437}, \\
\ps & = e_{4567} - e_{4523} - e_{4163} - e_{4127} - e_{2637} - e_{1537} - e_{1526}.
\end{aligned}
\end{equation}
From~\eqref{eq:forms-coordinates}, it is easy to read off, for example, that $e_1 \times e_2 = e_3$ and $e_5 \times e_6 = - e_3$. Thus $\{ e_1, e_2, e_3 \}$ and $\{ -e_3, e_5, e_6 \}$ are oriented orthonormal bases of associative $3$-planes.

\subsection{The decomposition $\Lambda^2 = \Lambda^2_7 \oplus \Lambda^2_{14}$} \label{sec:lambda-2}

Using the inner product on $\R^7$, the Lie algebra $\so(7)$ of skew-adjoint operators is identified with the space $\Lambda^2$ of skew-symmetric bilinear forms. There is an orthogonal decomposition $\Lambda^2 = \Lambda^2_7 \oplus \Lambda^2_{14}$ into irreducible representations of $\lieg$ that is described as follows. (See~\cite{K-intro} or~\cite{K-flows} for more details about those facts that are presented here without proof.)

Since $\G \subseteq \SO{7}$, we get $\lieg \subseteq \so(7)$, and under the identification $\Lambda^2 = \so(7)$ we have $\Lambda^2_{14} = \lieg$. Since $\G$ is the subgroup of $\SO{7}$ preserving $\ph$, the infinitesimal expression of this condition is
\begin{equation} \label{eq:X-in-g2-variant}
X \in \Lambda^2_{14} \quad \iff \quad \ph(X(u), v, w) + \ph(u, X(v), w) + \ph(u, v, X(w)) = 0 \quad \text{ for all $u, v, w$.}
\end{equation}
Using~\eqref{eq:ph-cross}, we have $\ph(X(u), v, w) = \langle X(u), v \times w \rangle = X(u, v \times w)$, so we can rewrite~\eqref{eq:X-in-g2-variant} as
\begin{equation} \label{eq:X-in-g2}
X \in \Lambda^2_{14} \quad \iff \quad  X(u, v \times w) + X(v, w \times u) + X(w, u \times v) = 0 \quad \text{ for all $u, v, w$.}
\end{equation}
Note that $\Lambda^2_{14}$ is a Lie subalgebra of $\Lambda^2$. In terms of a local orthonormal frame, $X \in \Lambda^2_{14}$ if and only if
\begin{equation} \label{eq:g2-deriv}
X_{ip} \ph_{pjk} + X_{jp} \ph_{ipk} + X_{kp} \ph_{ijp} = 0.
\end{equation}
We can also express~\eqref{eq:X-in-g2} in the following useful equivalent form. Using the skew-symmetry of $X(u,v) = \langle X(u), v \rangle$ and of $\ph(u,v,w) = \langle u \times v, w \rangle$, we have
\begin{align*}
X(u, v \times w) + X(v, w \times u) + X(w, u \times v) & = - X(v \times w, u) + \langle X(v), w \times u \rangle + \langle X(w), u \times v \rangle \\
& = - \langle X(v \times w), u \rangle + \langle u, X(v) \times w \rangle + \langle u, v \times X(w) \rangle \\
& = \langle - X(v \times w) + X(v) \times w + v \times X(w), u \rangle.
\end{align*}
By the nondegeneracy of the inner product, we deduce from the above that
\begin{equation} \label{eq:X-in-g2-alt}
X \in \Lambda^2_{14} \quad \iff \quad  X(v \times w) = X(v) \times w + v \times X(w) \quad \text{ for all $v, w$.}
\end{equation}
That is,~\eqref{eq:X-in-g2-alt} says that $\lieg = \Lambda^2_{14} \subseteq \Lambda^2 = \so(7)$ consists of those derivations in $\so(7)$ that (infinitesimally) preserve the cross product $\times$. This is exactly what we expect, because $g$ and $\times$ together determine $\ph$, and $\lieg$ is precisely the space of derivations in $\mathfrak{gl}(7, \R)$ preserving $\ph$.

The following observation about elements of $\Lambda^2_{14}$ is used twice in Section~\ref{sec:g2}.
\begin{prop} \label{prop:X-in-g2-identity}
Let $X \in \Lambda^2_{14} = \lieg$, and suppose that $u, v, y$ are \emph{orthogonal}. Then we have
$$ X(u \times y, v \times y) = |y|^2 X(u, v) + X(y, (u \times v) \times y). $$
\end{prop}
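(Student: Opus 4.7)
My plan is to combine the derivation characterization~\eqref{eq:X-in-g2-alt} of $X \in \lieg$ with the basic inner product identity
$$ \langle a \times b, c \times d \rangle = \langle a, c \rangle \langle b, d \rangle - \langle a, d \rangle \langle b, c \rangle - \ps(a, b, c, d), $$
which follows immediately from~\eqref{eq:fundamental} after contracting with $a_i b_j c_k d_l$.

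First I would use~\eqref{eq:X-in-g2-alt} to rewrite the left-hand side as
$$ X(u \times y, v \times y) = \langle X(u) \times y, v \times y \rangle + \langle u \times X(y), v \times y \rangle, $$
and handle the two pieces in turn. Expanding $\langle X(u) \times y, v \times y \rangle$ by the identity above gives
$|y|^2 \langle X(u), v \rangle - \langle X(u), y \rangle \langle y, v \rangle - \ps(X(u), y, v, y)$;
the middle term vanishes by $v \perp y$, while the $\ps$-term vanishes because $\ps$ is a $4$-form with the argument $y$ repeated. This piece collapses to $|y|^2 X(u,v)$. Expanding $\langle u \times X(y), v \times y \rangle$ similarly gives
$\langle u, v \rangle \langle X(y), y \rangle - \langle u, y \rangle \langle X(y), v \rangle - \ps(u, X(y), v, y)$;
skew-adjointness of $X$ kills the first term, $u \perp y$ kills the second, and $-\ps(u, X(y), v, y)$ survives. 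Summing, the left-hand side equals $|y|^2 X(u,v) - \ps(u, X(y), v, y)$.

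To match the right-hand side, I would compute $(u \times v) \times y = - y \times (u \times v)$ directly from~\eqref{eq:iterated-cross}; the $\langle y,u\rangle v$ and $\langle y,v\rangle u$ contributions drop out because $y \perp u$ and $y \perp v$, leaving $(u \times v) \times y = -\ps(y,u,v)$. Therefore
$$ X(y, (u \times v) \times y) = -\langle X(y), \ps(y, u, v) \rangle = -\ps(y, u, v, X(y)), $$
and two transpositions of the skew-symmetric $\ps$ (swap positions $1,4$, then positions $1,2$) rewrite this as $-\ps(u, X(y), v, y)$, which is exactly the surviving term from the previous step.

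The only real obstacle is keeping the signs straight in the two $\ps$-terms, so that the permutation of arguments produced by expanding the inner products on the left-hand side exactly cancels the permutation produced by the iterated cross product on the right-hand side. As an aside, the argument uses only $u \perp y$ and $v \perp y$; the orthogonality $u \perp v$ is not needed anywhere.
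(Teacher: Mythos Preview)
Your argument is correct, and it is genuinely different from the paper's. The paper applies the cyclic identity~\eqref{eq:X-in-g2} with the triple $(u \times y, v, y)$ to write
\[
X(u \times y, v \times y) = - X(v, y \times (u \times y)) - X(y, (u \times y) \times v),
\]
and then simplifies the two terms using~\eqref{eq:iterated-cross-same} and the total skew-symmetry~\eqref{eq:iterated-cross-orthog} of the iterated cross product on an orthogonal triple. You instead use the derivation form~\eqref{eq:X-in-g2-alt} to split $X(u \times y)$, and then expand the resulting inner products of cross products via the contraction identity~\eqref{eq:fundamental}, making the $\ps$-term appear explicitly. Both routes are short; the paper's stays entirely at the level of cross products and never writes $\ps$, while yours trades that for a mechanical inner-product expansion. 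Your closing observation is a genuine bonus: because the paper invokes~\eqref{eq:iterated-cross-orthog}, its argument really does use $u \perp v$, whereas your computation only needs $u \perp y$ and $v \perp y$ (the term $\langle u, v \rangle \langle X(y), y \rangle$ dies by skew-adjointness regardless), so you have in fact proved a slightly stronger statement.
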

\begin{proof}
Using~\eqref{eq:X-in-g2},~\eqref{eq:iterated-cross-same}, and~\eqref{eq:iterated-cross-orthog}, we compute
\begin{align*}
X(u \times y, v \times y) & = - X(v, y \times (u \times y)) - X(y, (u \times y) \times v) \\
& = X(v, y \times (y \times u) ) + X(y, v \times (u \times y) ) \\
& = X(v, - |y|^2 u) + X(y, y \times (v \times u)) \\
& = |y|^2 X(u,v) + X(y, (u \times v) \times y)
\end{align*}
as claimed.
\end{proof}

The subspace $\Lambda^2_7$, which is \emph{not} a Lie subalgebra (see Section~\ref{sec:structure-splitting}), is the orthogonal complement of $\Lambda^2_{14}$. In fact we have (see~\cite{K-intro, K-flows}) that
\begin{equation} \label{eq:27-first}
X \in \Lambda^2_7 \quad \iff \quad X = u \hk \ph \quad \text{for some $u \in \R^7$}
\end{equation}
and indeed the map $\R^7 \to \Lambda^2_7$ given by $u \mapsto u \hk \ph$ is a linear isomorphism.

There are various equivalent descriptions of the spaces $\Lambda^2_7$ and $\Lambda^2_{14}$ that are also useful. These are
\begin{equation} \label{eq:Lambda-2-7-14}
\begin{aligned}
X & \in \Lambda^2_7 & \iff \quad \sta (X \w \ph) & = -2 X & \iff \quad X = u \hk \ph, \\
X & \in \Lambda^2_{14} & \iff \quad \sta (X \w \ph) & = X & \iff \quad X \w \ps = 0.
\end{aligned}
\end{equation}
From the fact that $(\sta (X \w \ph))_{ij} = \frac{1}{2} \ps_{ijpq} X_{pq}$, the above descriptions in terms of a local orthonormal frame become
\begin{equation} \label{eq:Lambda-2-7-14-frame}
\begin{aligned}
X_{ij} & \in \Lambda^2_7 & \iff \quad \ps_{ijpq} X_{pq} & = -4 X_{ij} & \iff \quad X_{ij} = u_p \ph_{pij}, \\
X_{ij} & \in \Lambda^2_{14} & \iff \quad \ps_{ijpq} X_{pq} & = 2 X_{ij} & \iff \quad X_{pq} \ph_{pqk} = 0,
\end{aligned}
\end{equation}
and moreover we have
\begin{equation} \label{eq:Pbeta}
\ps_{ijpq} X_{pq} = - 4 (X_7)_{ij} + 2 (X_{14})_{ij}
\end{equation}
where $X_7$ and $X_{14}$ denote, respectively, the orthogonal projections of $X \in \Lambda^2$ onto $\Lambda^2_7$ and $\Lambda^2_{14}$. We use~\eqref{eq:Pbeta} in Section~\ref{sec:structure-splitting} to study the splitting $\Lambda^2 = \Lambda^2_7 \oplus \Lambda^2_{14}$ at a deeper level.

Finally, there is a useful formula for the projection $X_7$ of an element $X \in \Lambda^2$, given by
\begin{equation} \label{eq:X7-projection}
(X_7)_{ij} = u_p \ph_{pij} \qquad \text{where} \qquad u_p = \tfrac{1}{6} X_{ij} \ph_{ijp}.
\end{equation}
Consider the special case $X = v \w w \in \Lambda^2$. Then $X_{ij} = v_i w_j - v_j w_i$, so equations~\eqref{eq:X7-projection} and~\eqref{eq:cross-frame} give $(\pi_7 (u \w v))_{ij} = u_p \ph_{pij}$ where $u_p = \frac{1}{6} (v_i w_j - v_j w_i) \ph_{ijp} = \frac{1}{3} v_i w_j \ph_{ijp} = \frac{1}{3} (v \times w)_p$. That is, we have shown that
\begin{equation} \label{eq:pi7-vw}
\pi_7 (v \w w) = \tfrac{1}{3} (v \times w) \hk \ph.
\end{equation}

\subsection{Canonical forms for $\so(7) = \Lambda^2$ and for $\Lambda^2_7$} \label{sec:so7-canonical-form}

Let us recall the canonical form theorem for a real skew-symmetric bilinear form on $\R^n$ (specialized to the case $n=7)$. If $X \in \Lambda^2$, the matrix for $X$ with respect to the standard basis is skew-symmetric, and thus the linear operator $X \colon \R^7 \to \R^7$ given by $\langle X(u), v \rangle = X(u, v)$ is skew-adjoint. (In particular, its matrix with respect to \emph{any} orthonormal frame for $\R^7$ will be skew-symmetric.)

It is then a standard result from linear algebra that the eigenvalues of $X$ are all purely imaginary and come in complex conjugate pairs. That is, there exist real numbers
$$ \lambda \geq \nu \geq \mu \geq 0 $$
such that the eigenvalues of $X$ are $\pm i \lambda$, $\pm i \nu$, $\pm i \mu$, and $0$. It follows that we can \emph{orthogonally} decompose $\R^7 = E_{\lambda} \oplus E_{\nu} \oplus E_{\mu} \oplus E_0$ into $X$-invariant subspaces where:
\begin{itemize}
\item The space $E_0$ is a $1$-dimensional subspace, on which $X$ vanishes (this space corresponds to the one eigenvalue that is necessarily zero, although there could be more as $\lambda \geq \nu \geq \mu \geq 0$).
\item The space $E_{\lambda}$ is a $2$-dimensional subspace for which we can find an orthonormal basis $\{ u, \wt u \}$ such that $X(u) = \lambda \wt u$ and $X(\wt u) = - \lambda u$. That is, restricted to $E_{\lambda}$, the operator $X$ is a $90^{\circ}$ degree counterclockwise rotation of the oriented plane $u \w \wt u$ followed by a dilation by $\lambda \geq 0$. (Analogously for the spaces $E_{\nu}$ and $E_{\mu}$). In particular, note that we can replace the ordered basis $\{ u, \wt u \}$ by $\{ u', \wt u' \} = \{ (\cos \alpha) u - (\sin \alpha) \wt u, (\sin \alpha) u + (\cos \alpha) \wt u \}$ and we still have $X(u') = \lambda \wt u'$, $X(\wt u') = - \lambda u'$.
\end{itemize}
In summary, there exists an orthonormal basis $\cB = \{ e_1, \ldots, e_7 \}$ for $\R^7$ such that, with respect to the basis $\cB$, the matrix $[X]_{\cB}$ for $X$ takes the form
\begin{equation} \label{eq:so7-canonical-form}
[X]_{\cB} = \begin{pmatrix} 0 & & & \\ & \begin{matrix} 0 & -\mu \\ \mu & 0 \end{matrix} & & \\ & & \begin{matrix} 0 & -\nu \\ \nu & 0 \end{matrix} & \\ & & & \begin{matrix} 0 & -\lambda \\ \lambda & 0 \end{matrix} \end{pmatrix}.
\end{equation}
In particular, the rank of $X$ must be one of $0$, $2$, $4$, or $6$.

We use the above canonical form to prove a refined canonical form theorem for elements of $\Lambda^2_{14} = \lieg$ in Theorem~\ref{thm:g2-canonical-form}.

Here we consider the easier case $\Lambda^2_7$. Let $X = u \hk \ph \in \Lambda^2_7$. From~\eqref{eq:ph-cross} we have
$$ \langle X(v), w \rangle = X(v, w) = (u \hk \ph)(v, w) = \ph(u,v,w) = \langle u \times v, w \rangle. $$
Thus we have $X(v) = u \times v$ for all $v$. Write $u = \lambda e_1$, where $\lambda = | u | \geq 0$. If $X \neq 0$, then $u \neq 0$ and $\lambda > 0$. In this case $e_1$ is uniquely determined by $e_1 = \lambda^{-1} u$. If $u = 0$, then $\lambda = 0$ and $e_1$ is arbitrary. Note that $X = \lambda Y$, where $Y(v) = e_1 \times v$.

Complete $\{ e_1 \}$ to a $\G$-adapted frame $\cB = \{ e_1, \ldots, e_7 \}$. Then from~\eqref{eq:forms-coordinates} we have
\begin{align*}
Y(e_1) & = e_1 \times e_1 = 0, & \quad Y(e_2) & = e_1 \times e_2 = e_3, & \quad Y(e_3) & = e_1 \times e_3 = - e_2, \\
Y(e_4) & = e_1 \times e_4 = e_5, & \quad Y(e_5) & = e_1 \times e_5 = - e_4, & \quad Y(e_6) & = e_1 \times e_6 = - e_7, & \quad Y(e_7) & = e_1 \times e_7 = e_6.
\end{align*}
The above equations express the fact that the linear map $Y$ maps the orthogonal complement of the span of $\{ e_1 \}$ into itself, and squares to minus the identity. This can also be seen from~\eqref{eq:iterated-cross-same}.

With respect to the frame $\cB$, the matrix $[X]_{\cB}$ for $X$ takes the form
\begin{equation} \label{eq:canonical-form-7}
[X]_{\cB} = \begin{pmatrix} 0 & & & \\ & \begin{matrix} 0 & -\lambda \\ \lambda & 0 \end{matrix} & & \\ & & \begin{matrix} 0 & -\lambda \\ \lambda & 0 \end{matrix} & \\ & & & \begin{matrix} 0 & \lambda \\ -\lambda & 0 \end{matrix} \end{pmatrix}.
\end{equation}

\begin{cor} \label{cor:canonical-form-7}
Let $X \in \Lambda^2_7$. Then $\rank X \in \{ 0, 6 \}$. That is, $\rank X \neq 2, 4$.
\end{cor}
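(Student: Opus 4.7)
The corollary is an immediate consequence of the canonical form \eqref{eq:canonical-form-7} just derived. My plan is as follows.

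I would first recall that every $X \in \Lambda^2_7$ is of the form $X = u \hk \ph$, and that by the discussion preceding the statement the matrix of $X$ in a suitably chosen $\G$-adapted frame $\cB$ is exactly the block-diagonal matrix in \eqref{eq:canonical-form-7}, where $\lambda = |u| \geq 0$. Since the rank of a matrix is invariant under change of basis, it suffices to compute the rank of this explicit block matrix.

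Next, I would split into two cases based on $\lambda$. If $\lambda = 0$, then $u = 0$, so $X = 0$ and $\rank X = 0$. If $\lambda > 0$, the matrix is block-diagonal with one $1 \times 1$ zero block and three $2 \times 2$ blocks of the form $\left( \begin{smallmatrix} 0 & \mp \lambda \\ \pm \lambda & 0 \end{smallmatrix} \right)$, each of determinant $\lambda^2 > 0$ and hence of rank $2$. Summing the ranks of the blocks gives $\rank X = 0 + 2 + 2 + 2 = 6$.

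This establishes $\rank X \in \{0, 6\}$, ruling out ranks $2$ and $4$. There is no real obstacle here; the substantive work has already been done in deriving the canonical form \eqref{eq:canonical-form-7}, which forces the three nontrivial $2 \times 2$ blocks to share the single parameter $\lambda$. This is what distinguishes $\Lambda^2_7$ from the general case \eqref{eq:so7-canonical-form} in $\Lambda^2$, where the three blocks carry independent eigenvalues $\lambda, \nu, \mu$ and all ranks $0, 2, 4, 6$ can occur.
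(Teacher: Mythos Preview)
Your proof is correct and follows exactly the same approach as the paper: the paper's proof is the one-liner ``If $X \neq 0$, then $\lambda > 0$, so $\rank X = 6$ by~\eqref{eq:canonical-form-7},'' and your argument is simply a more detailed unpacking of this.
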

\begin{proof}
If $X \neq 0$, then $\lambda > 0$, so $\rank X = 6$ by~\eqref{eq:canonical-form-7}.
\end{proof}

\begin{rmk} \label{rmk:canonical-form-7}
Note that~\eqref{eq:canonical-form-7} is \emph{almost} of the form~\eqref{eq:so7-canonical-form}, with $\mu = \nu = \lambda$, except that we need to replace $e_6 \mapsto - e_6$. That is, one cannot achieve the canonical form~\eqref{eq:so7-canonical-form} for an element $X \in \Lambda^2_7$ using a $\G$-adapted frame, but rather with a frame that is obtained from a $\G$-adapted frame by a reflection. The fact that $\lambda = \mu = \nu$ is what rules out the possibilities of rank $2$ or $4$. (The reader should compare this with Remark~\ref{rmk:g2-canonical-form} below, where we observe that one \emph{can} achieve the canonical form~\eqref{eq:so7-canonical-form} for an element $X \in \Lambda^2_{14}$ using a $\G$-adapted frame.)
\end{rmk}

\section{The Lie algebra $\lieg = \Lambda^2_{14}$ as a subalgebra of $\so(7) = \Lambda^2$} \label{sec:g2}

In this section we consider the inclusion of Lie algebras $\lieg \subseteq \so(7)$. We first establish some facts about elements of $\Lambda^2_{14}$, namely that: (i) if their rank is less than maximal, their kernel contains an associative $3$-plane; and (ii) elements of $\Lambda^2_{14}$ cannot have rank $2$. Then we prove a general canonical form theorem for elements of $\Lambda^2_{14}$. See also Remark~\ref{rmk:maximal-torus}.

\subsection{The kernel and rank of elements of $\lieg = \Lambda^2_{14}$} \label{sec:rank-kernel-g2}

Let $X \in \Lambda^2_{14}$. Since $X$ is identified with a skew-adjoint operator, we have $X^* = - X$, and thus $\im X = (\ker X^*)^{\perp} = (\ker X)^{\perp}$, so we have an orthogonal decomposition $\R^7 = (\ker X) \oplus (\im X)$.

\begin{prop} \label{prop:kernel-assoc}
Let $X \in \Lambda^2_{14}$. Suppose $\rank X \leq 4$. Then $\ker X$ contains an associative $3$-plane.
\end{prop}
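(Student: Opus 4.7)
The plan is to exploit the fact that $\Lambda^2_{14}$ acts as derivations of the cross product, which forces the kernel of any $X \in \Lambda^2_{14}$ to be itself closed under $\times$. Once that is established, finding an associative $3$-plane inside $\ker X$ is essentially automatic as soon as $\dim \ker X \geq 3$.

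First I would invoke the characterization \eqref{eq:X-in-g2-alt}: since $X \in \Lambda^2_{14}$, we have $X(v \times w) = X(v) \times w + v \times X(w)$ for all $v, w \in \R^7$. Consequently, if $v, w \in \ker X$, then $X(v \times w) = 0$, so $v \times w \in \ker X$. Therefore $\ker X$ is a subspace of $\R^7$ closed under the cross product.

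Next I would use the canonical form \eqref{eq:so7-canonical-form} for a skew-adjoint operator on $\R^7$, which implies $\rank X \in \{0, 2, 4, 6\}$, and hence $\rank X \leq 4$ gives $\dim \ker X \geq 3$. Then I would pick any two orthonormal vectors $u, v \in \ker X$. By closure, $u \times v \in \ker X$, and by the basic properties of $\times$ recalled in Section~\ref{sec:review}, the vector $u \times v$ is orthogonal to both $u$ and $v$ and satisfies $|u \times v|^2 = |u \wedge v|^2 = 1$. Thus $\{u, v, u \times v\}$ is an orthonormal set contained in $\ker X$.

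Finally, I would observe that the $3$-dimensional subspace $A = \mathrm{span}\{u, v, u \times v\} \subseteq \ker X$ is associative: the identity $u \times (u \times v) = -v$ coming from \eqref{eq:iterated-cross-same} (and similarly $v \times (u \times v) = u$ after rewriting via skew-symmetry) shows that $A$ is closed under $\times$, which is exactly the content of Definition~\ref{defn:assoc}. Alternatively, one can note that $\ker X$ is closed under $\times$ and contains the orthonormal frame $\{u, v, u \times v\}$, so this frame automatically satisfies the criterion $e_i \times e_j = \pm e_k$ for $i, j, k$ distinct. There is no real obstacle here; the only nontrivial point is recognizing that the derivation property \eqref{eq:X-in-g2-alt} for elements of $\Lambda^2_{14}$ immediately upgrades $\ker X$ to a subalgebra under the cross product, after which the associative $3$-plane is produced by elementary cross-product arithmetic.
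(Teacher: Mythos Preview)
Your proof is correct and follows essentially the same idea as the paper's: both use the fact that $X \in \Lambda^2_{14}$ acts as a derivation of the cross product to show that $\ker X$ is closed under $\times$, then extract an associative $3$-plane from any orthonormal pair in $\ker X$. The only cosmetic difference is that you invoke the derivation identity~\eqref{eq:X-in-g2-alt} directly to get $X(u \times v) = 0$, whereas the paper uses the equivalent $3$-form version~\eqref{eq:X-in-g2-variant} together with the decomposition $\R^7 = \ker X \oplus \im X$ to show $u_1 \times u_2 \perp \im X$; your route is slightly more streamlined but the content is identical.
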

\begin{proof}
Since $\rank X \leq 4$ and $\R^7 = (\ker X) \oplus (\im X)$, we have $\dim (\ker X) \geq 3$. Thus there exists an orthonormal pair $u_1, u_2 \in \ker X$. Let $w \in (\ker X)^{\perp} = \im X$. There exists $v \in \R^7$ with $w = X(v)$. Since $X(u_1) = X(u_2) = 0$, we have
\begin{align*}
\langle u_1 \times u_2, w \rangle & = \ph(u_1, u_2, w) = \ph(u_1, u_2, X(v)) \\
& = \ph(X(u_1), u_2, v) + \ph(u_1, X(u_2), v) + \ph(u_1, u_2, X(v)),
\end{align*}
which vanishes by~\eqref{eq:X-in-g2-variant}, so $u_1 \times u_2$ is orthogonal to $w$. Since $w \in (\ker X)^{\perp}$ is arbitrary, we find that $u_1 \times u_2 \in \ker X$, so $\ker X$ contains the associative $3$-plane spanned by $\{ u_1, u_2, u_1 \times u_2 \}$.
\end{proof}

\begin{cor} \label{cor:kernel-assoc}
Let $X \in \Lambda^2_{14}$. Suppose $\rank X \leq 4$. Then there exists a $\G$-adapted frame $\cB = \{ e_1, \ldots, e_7 \}$ such that, with respect to the basis $\cB$, the matrix $[X]_{\cB}$ takes the form
\begin{equation} \label{eq:kernel-assoc-temp1}
[X]_{\cB} = \begin{pmatrix} 0_{3 \times 3} & 0_{3 \times 4} \\ 0_{4 \times 3} & Y_{4 \times 4} \end{pmatrix}
\end{equation}
where the $4 \times 4$ matrix $Y \in \so(4)$ is of the form
\begin{equation} \label{eq:kernel-assoc-temp2}
Y = \begin{pmatrix} 0 & - a & - b & - c \\ a & 0 & -c & b \\ b & c & 0 & -a \\ c & -b & a & 0 \end{pmatrix}
\end{equation}
for some $a, b, c \in \R$.
\end{cor}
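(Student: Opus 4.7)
The plan is to first use Proposition~\ref{prop:kernel-assoc} to find an associative $3$-plane $A \subseteq \ker X$, then build a $\G$-adapted frame whose first three vectors span $A$, so that the vanishing of $X$ on $A$ immediately produces the block form \eqref{eq:kernel-assoc-temp1}. Explicitly, I would choose an oriented orthonormal basis $\{e_1, e_2, e_3\}$ of $A$ (so $e_i \times e_j = e_k$ for cyclic $i,j,k$), pick any unit vector $e_4 \in A^{\perp}$, and define $e_5, e_6, e_7$ by \eqref{eq:G2-adapted}; the resulting frame is $\G$-adapted, and since $e_1, e_2, e_3 \in \ker X$, the first three columns of $[X]_{\cB}$ are zero, while skew-symmetry of $[X]_{\cB}$ forces the first three rows to vanish as well.

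To identify the $4 \times 4$ block $Y$ on $A^{\perp} = \mathrm{span}\{e_4, e_5, e_6, e_7\}$, I would start from the observation that $\langle X(e_4), e_4 \rangle = 0$ and $X(e_4) \in (\ker X)^{\perp} \subseteq \mathrm{span}\{e_4, \ldots, e_7\}$, so
\[
X(e_4) = -a\, e_5 - b\, e_6 - c\, e_7
\]
for some $a, b, c \in \R$; this already fixes the first row and (by skew-symmetry) the first column of $Y$. The remaining entries are then forced by the derivation property \eqref{eq:X-in-g2-alt}: since $X(e_1) = X(e_2) = X(e_3) = 0$ and $e_5 = e_1 \times e_4$, $e_6 = e_2 \times e_4$, $e_7 = e_3 \times e_4$, we obtain
\[
X(e_5) = e_1 \times X(e_4), \qquad X(e_6) = e_2 \times X(e_4), \qquad X(e_7) = e_3 \times X(e_4).
\]

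The final step is to expand each of these using the cross-product table readable off of \eqref{eq:forms-coordinates}. For instance, $e_1 \times e_5 = -e_4$, $e_1 \times e_6 = -e_7$, $e_1 \times e_7 = e_6$ (from \eqref{eq:iterated-cross-same} and the terms $-e_{167}$ of $\ph$), and analogous identities for $e_2$ and $e_3$. Substituting then yields $X(e_5) = a e_4 - c e_6 + b e_7$, $X(e_6) = b e_4 + c e_5 - a e_7$, $X(e_7) = c e_4 - b e_5 + a e_6$, which is exactly the matrix \eqref{eq:kernel-assoc-temp2}.

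The step with any real content is the first one, and it is already done in Proposition~\ref{prop:kernel-assoc}; after that, the main obstacle is simply careful bookkeeping of the cross products among the $\G$-adapted basis vectors, which is routine. One small sanity check worth doing once is to verify that the resulting $Y$ is automatically skew-symmetric (equivalently, that $\langle X(e_i), e_j \rangle = -\langle X(e_j), e_i \rangle$ holds for $4 \leq i, j \leq 7$), which is guaranteed by the skew-adjointness of $X$ but gives an immediate consistency check on the coefficients $a, b, c$ appearing in the three computed rows.
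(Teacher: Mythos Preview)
Your proof is correct and follows essentially the same strategy as the paper: invoke Proposition~\ref{prop:kernel-assoc} to place an associative $3$-plane inside $\ker X$, extend to a $\G$-adapted frame, and then use the $\lieg$ condition to constrain the $4 \times 4$ block. The only minor difference is that the paper applies the packaged identity of Proposition~\ref{prop:X-in-g2-identity} (with $y = e_4$ and $u,v \in \{e_1,e_2,e_3\}$) to obtain the three relations $X_{56} = X_{47}$, $X_{67} = X_{45}$, $X_{75} = X_{46}$, whereas you use the equivalent derivation property~\eqref{eq:X-in-g2-alt} directly to compute each column $X(e_5), X(e_6), X(e_7)$ from $X(e_4)$; both routes encode the same infinitesimal $\G$-invariance and arrive at~\eqref{eq:kernel-assoc-temp2}.
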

\begin{proof}
By Proposition~\ref{prop:kernel-assoc}, we can find $\{ e_1, e_2, e_3 = e_1 \times e_2 \}$ orthonormal in $\ker X$. Let $e_4$ be a unit vector orthogonal to $\{ e_1, e_2, e_3 \}$, and define the $\G$-adapted frame $\cB = \{ e_1, \ldots, e_7 \}$ by~\eqref{eq:G2-adapted}. Since $X(e_i, v) = \langle X(e_i), v \rangle = 0$ for $i = 1, 2, 3$ and any $v \in \R^7$, we see that $X$ has the block diagonal form~\eqref{eq:kernel-assoc-temp2}.

Letting $y = e_4$ and $u, v$ be distinct elements of $\{ e_1, e_2, e_3 \}$ in Proposition~\ref{prop:X-in-g2-identity}, we obtain
$$ X( e_i \times e_4, e_j \times e_4 ) = X(e_i, e_j) + X(e_4, (e_i \times e_j) \times e_4) = X(e_i, e_j) + X(e_4, e_k \times e_4). $$
The above yields the three independent identities
$$ X_{56} =  X_{47}, \qquad X_{67} = X_{45}, \qquad X_{75} = X_{46}. $$
These conditions and the skew-symmetry of $X$ give~\eqref{eq:kernel-assoc-temp2}.
\end{proof}

\begin{cor} \label{cor:no-rank-2}
Let $X \in \Lambda^2_{14}$. Then $\rank X \in \{ 0, 4, 6 \}$. That is, $\rank X \neq 2$.
\end{cor}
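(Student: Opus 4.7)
The plan is to argue by contradiction using the structural result already in hand from Corollary~\ref{cor:kernel-assoc}. Assume $X \in \Lambda^2_{14}$ has $\rank X = 2$. Since $2 \leq 4$, we may apply Corollary~\ref{cor:kernel-assoc} to obtain a $\G$-adapted frame $\cB$ in which $[X]_{\cB}$ is block-diagonal with a zero $3\times 3$ block in the associative kernel and a $4\times 4$ skew block
\[
Y = \begin{pmatrix} 0 & -a & -b & -c \\ a & 0 & -c & b \\ b & c & 0 & -a \\ c & -b & a & 0 \end{pmatrix}
\]
for some $a,b,c \in \R$. Since $\rank X = \rank Y$, the task reduces to showing that this particular one-parameter family of skew matrices cannot have rank exactly $2$.

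The key observation is that this $Y$ is very special: I would compute (or inspect column by column) that
\[
Y^{T} Y = (a^{2} + b^{2} + c^{2})\, I_{4}.
\]
One sees this quickly because the four columns of $Y$ all have the same squared norm $a^{2}+b^{2}+c^{2}$ and are pairwise orthogonal — the off-diagonal inner products cancel in pairs like $bc - cb$, $-ac+ca$, $ab-ab$. (Conceptually, $Y$ represents left multiplication by the imaginary quaternion $a\mathbf{i} + b\mathbf{j} + c\mathbf{k}$ on $\Qu \cong \R^{4}$, which is always conformal.)

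From $Y^{T} Y = (a^{2}+b^{2}+c^{2}) I_{4}$ the dichotomy is immediate: either $a=b=c=0$, in which case $Y = 0$ and $\rank Y = 0$, or $a^{2}+b^{2}+c^{2} > 0$, in which case $Y^{T}Y$ is invertible, forcing $Y$ itself to be invertible and hence $\rank Y = 4$. In no case can $\rank Y = 2$. This contradicts $\rank X = 2$, so combining with the canonical form~\eqref{eq:so7-canonical-form} (which forces $\rank X \in \{0,2,4,6\}$ to begin with) we conclude $\rank X \in \{0, 4, 6\}$.

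I do not anticipate any real obstacle here: the heavy lifting — producing an associative $3$-plane in $\ker X$ and deriving the canonical block form for $Y$ — has already been done in Proposition~\ref{prop:kernel-assoc} and Corollary~\ref{cor:kernel-assoc}. All that remains is the short linear-algebra fact that this specific skew matrix $Y$ has determinant $(a^{2}+b^{2}+c^{2})^{2}$, equivalently $Y^{T}Y = (a^{2}+b^{2}+c^{2})I_{4}$, which rules out rank $2$.
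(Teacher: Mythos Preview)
Your proof is correct and takes essentially the same approach as the paper: reduce via Corollary~\ref{cor:kernel-assoc} to the $4\times 4$ block $Y$ and then observe that $Y$ is either zero or invertible. The paper phrases the last step as $\det Y = (a^2+b^2+c^2)^2$, whereas you compute $Y^T Y = (a^2+b^2+c^2)I_4$ (and supply the nice quaternionic interpretation), but these are equivalent verifications of the same fact.
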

\begin{proof}
We already know that $\rank X$ is even. Suppose $\rank X \leq 4$. By Corollary~\ref{cor:kernel-assoc}, there exists a basis in which the matrix for $X$ takes the form~\eqref{eq:kernel-assoc-temp1}, where $Y$ is given by~\eqref{eq:kernel-assoc-temp2}. A computation reveals that $\det Y = (a^2 + b^2 + c^2)^2$, and thus if $Y \neq 0$, then $\rank Y = 4$. Hence $\rank X \in \{ 0, 4, 6 \}$.
\end{proof}

Of course, since $\rank X \geq 6$ is an open condition, the generic element of $\Lambda^2_{14}$ will have rank $6$.

\begin{cor} \label{cor:ranks}
Let $X \in \Lambda^2$. If $\rank X = 2$, then $X$ cannot be purely type $\Lambda^2_7$ nor purely type $\Lambda^2_{14}$. If $\rank X = 4$, then $X$ cannot be purely type $\Lambda^2_7$.
\end{cor}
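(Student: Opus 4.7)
The statement is an immediate corollary of the two rank classifications already established: Corollary~\ref{cor:canonical-form-7} says that $X \in \Lambda^2_7$ forces $\rank X \in \{0, 6\}$, while Corollary~\ref{cor:no-rank-2} says that $X \in \Lambda^2_{14}$ forces $\rank X \in \{0, 4, 6\}$. So no further computation is needed; the plan is purely to assemble these two facts by contrapositive.

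First I would handle the case $\rank X = 2$. If $X$ were purely of type $\Lambda^2_7$, then Corollary~\ref{cor:canonical-form-7} would give $\rank X \in \{0, 6\}$, contradicting $\rank X = 2$. Similarly, if $X$ were purely of type $\Lambda^2_{14}$, then Corollary~\ref{cor:no-rank-2} would give $\rank X \in \{0, 4, 6\}$, again contradicting $\rank X = 2$. Thus a rank $2$ element of $\Lambda^2$ must genuinely mix the two summands in the decomposition $\Lambda^2 = \Lambda^2_7 \oplus \Lambda^2_{14}$.

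Next I would handle $\rank X = 4$. If $X$ were purely of type $\Lambda^2_7$, then again Corollary~\ref{cor:canonical-form-7} gives $\rank X \in \{0, 6\}$, contradicting $\rank X = 4$. (Note that the analogous conclusion fails for $\Lambda^2_{14}$: Corollary~\ref{cor:no-rank-2} permits rank $4$ there, which is why the second statement of the corollary is asymmetric.)

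There is no real obstacle here — this corollary is essentially a bookkeeping consequence of the two canonical form results already proved in Sections~\ref{sec:so7-canonical-form} and~\ref{sec:rank-kernel-g2}. The only point worth emphasizing in the write-up is the asymmetry between the two parts, which reflects the fact that $\Lambda^2_{14}$ does admit rank $4$ elements (namely those whose kernel is precisely an associative $3$-plane, as in Proposition~\ref{prop:kernel-assoc}), whereas $\Lambda^2_7$ admits no nonzero elements of rank less than $6$.
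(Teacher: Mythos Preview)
Your proposal is correct and matches the paper's own proof exactly: the paper simply states that the result is immediate from Corollaries~\ref{cor:canonical-form-7} and~\ref{cor:no-rank-2}, which is precisely the contrapositive argument you give. The additional commentary about the asymmetry is accurate and helpful, though the paper does not spell it out.
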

\begin{proof}
This is immediate from Corollaries~\ref{cor:canonical-form-7} and~\ref{cor:no-rank-2}.
\end{proof}

\subsection{A canonical form theorem for elements of $\lieg = \Lambda^2_{14}$} \label{sec:canonical-form}

In this section we extend the results of Section~\ref{sec:rank-kernel-g2} by removing the restriction that $\rank X \leq 4$ for $X \in \Lambda^2_{14}$. In fact, this section is logically independent of Section~\ref{sec:rank-kernel-g2}, but it is instructive to have considered the special case $\rank X \leq 4$ before tackling the general case. The following is a canonical form theorem for elements of $\Lambda^2_{14} = \lieg$.

\begin{thm} \label{thm:g2-canonical-form}
Let $X \in \Lambda^2_{14} = \lieg$. There exists a $\G$-adapted frame $\cB = \{ e_1, \ldots, e_7 \}$ and real numbers $\lambda \geq \mu \geq 0$ such that, with respect to the basis $\cB$, the matrix $[X]_{\cB}$ takes the form
\begin{equation} \label{eq:g2-canonical-form-exp1}
[X]_{\cB} = \begin{pmatrix} \begin{matrix} 0 & - \lambda & 0 \\ \lambda & 0 & 0 \\ 0 & 0 & 0 \end{matrix} & \\ & \begin{matrix} 0 & 0 & 0 & \lambda - \mu \\ 0 & 0 & - \mu & 0 \\ 0 & \mu & 0 & 0 \\ - (\lambda - \mu) & 0 & 0 & 0 \end{matrix} \end{pmatrix}
\end{equation}
where all nondisplayed entries are zero. Moreover, there are exactly three cases that occur:
\begin{enumerate}[(i)]
\item if $\lambda = \mu$, then $\lambda = \mu = 0$, and $X = 0$;
\item if $\lambda > 0$ and $\mu = 0$, then $\rank X = 4$, and $\ker X$ is the associative $3$-plane spanned by $\{ e_3, e_5, e_6 \}$;
\item if $\lambda > \mu > 0$, then $\rank X = 6$, and $\ker X$ is spanned by $\{ e_3 \}$.
\end{enumerate}
\end{thm}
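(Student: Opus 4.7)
The plan is to proceed by case analysis on $\rank X$, which by Corollary~\ref{cor:no-rank-2} must be $0$, $4$, or $6$, corresponding directly to cases (i), (ii), and (iii). Case (i) is immediate.

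For case (ii), Corollary~\ref{cor:kernel-assoc} already provides a $\G$-adapted frame in which $X$ has the block form \eqref{eq:kernel-assoc-temp1}, with the $4 \times 4$ block $Y$ given by \eqref{eq:kernel-assoc-temp2} depending on three real parameters $(a,b,c)$. I would finish by exploiting the residual rotational freedom to change the $\G$-adapted frame while preserving the kernel associative 3-plane. The key observation is that $Y$ represents left multiplication on $\Qu \cong \R^4$ by the imaginary quaternion $q = ai + bj + ck$, and conjugating by a suitable unit quaternion reduces this to left multiplication by $|q|i$, leaving the single parameter $\lambda = \sqrt{a^2+b^2+c^2}$. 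After relabeling so that the kernel is spanned by $\{e_3, e_5, e_6\}$ rather than $\{e_1, e_2, e_3\}$, the matrix attains the form \eqref{eq:g2-canonical-form-exp1} with $\mu = 0$.

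For case (iii), I would apply the standard $\so(7)$ canonical form \eqref{eq:so7-canonical-form} to obtain an orthonormal basis $\{u_0, u_1, v_1, u_2, v_2, u_3, v_3\}$ with $u_0 \in \ker X$ and three orthogonal invariant 2-planes $E_i = \mathrm{span}(u_i, v_i)$ satisfying $X(u_i) = \omega_i v_i$, $X(v_i) = -\omega_i u_i$ for frequencies $\omega_1 \geq \omega_2 \geq \omega_3 > 0$. The derivation property \eqref{eq:X-in-g2-alt} immediately gives $X(u_i \times v_i) = \omega_i (v_i \times v_i) - \omega_i (u_i \times u_i) = 0$, so $u_i \times v_i = \epsilon_i u_0$ for signs $\epsilon_i \in \{\pm 1\}$. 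Iterating \eqref{eq:iterated-cross-same} then yields $u_0 \times u_i = \epsilon_i v_i$, and a short computation with \eqref{eq:ph-cross} shows that for distinct $i,j$ the unit vector $u_i \times u_j$ is orthogonal to $E_i \oplus E_j \oplus \R u_0$, hence lies in the third invariant 2-plane $E_k$. The crucial step is to derive the frequency constraint by computing $X^2(u_i \times u_j)$ in two ways: since $u_i \times u_j \in E_k$ one has $X^2(u_i \times u_j) = -\omega_k^2 (u_i \times u_j)$, while applying \eqref{eq:X-in-g2-alt} twice gives
\[
X^2(u_i \times u_j) = -(\omega_i^2 + \omega_j^2)(u_i \times u_j) + 2\omega_i\omega_j (v_i \times v_j).
\]
Equating these and using $|u_i \times u_j| = |v_i \times v_j| = 1$ forces $\omega_k \in \{\omega_i + \omega_j,\ |\omega_i - \omega_j|\}$, and the ordering $\omega_1 \geq \omega_2 \geq \omega_3 > 0$ eliminates every possibility except $\omega_1 = \omega_2 + \omega_3$. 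Setting $\lambda = \omega_1$ and $\mu = \omega_3$ then produces the three frequencies $\lambda$, $\lambda - \mu$, $\mu$ appearing in \eqref{eq:g2-canonical-form-exp1}.

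The main obstacle in both nontrivial cases is the final bookkeeping step: arranging signs and indexing so that the resulting frame is actually $\G$-adapted in the sense of \eqref{eq:G2-adapted} and $X$ attains precisely the asymmetric sign pattern of \eqref{eq:g2-canonical-form-exp1} rather than some sign variant. The inputs for this finite combinatorial verification are the $\epsilon_i$, the sign of the scalar in $v_i \times v_j = \pm (u_i \times u_j)$ extracted from the computation above, and the explicit structure constants \eqref{eq:forms-coordinates}; the remaining discrete freedom (the sign of $u_0$ and the orientation of each $E_i$, the latter coupling $\omega_i \leftrightarrow -\omega_i$ with $\epsilon_i \leftrightarrow -\epsilon_i$) turns out to be exactly enough to match the target. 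Finally, the three sub-cases (i)-(iii) correspond to the mutually exclusive possibilities $\lambda = \mu = 0$, $\lambda > 0 = \mu$, and $\lambda > \mu > 0$, and the stated description of $\ker X$ in each case is read off from inspecting \eqref{eq:g2-canonical-form-exp1}.
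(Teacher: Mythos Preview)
Your approach is genuinely different from the paper's. The paper does not split by rank; instead it builds a $\G$-adapted frame from the outset by choosing $e_1, e_2$ in the $E_\lambda$ plane, setting $e_3 = e_1 \times e_2$, choosing $e_4 \in E_\nu$, and defining $e_5, e_6, e_7$ via~\eqref{eq:G2-adapted}. It then applies Proposition~\ref{prop:X-in-g2-identity} to obtain the three relations $X_{56} = -\lambda + X_{47}$, $X_{67} = X_{45}$, $X_{75} = X_{46}$, and finishes with a rotation argument (or the algebraic computation of Remark~\ref{rmk:canonical-form-alternate}) to force $X_{45} = X_{46} = 0$ and $\lambda = \nu + \mu$. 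Your $X^2$ trick for the frequency constraint is a pleasant alternative: computing $X^2(u_i \times u_j)$ both as $-\omega_k^2(u_i \times u_j)$ and via the derivation rule gives $|\omega_i^2 + \omega_j^2 - \omega_k^2| = 2\omega_i\omega_j$ directly, with no need for Proposition~\ref{prop:X-in-g2-identity}. The trade-off is that the paper's method produces the $\G$-adapted frame for free, while you must assemble it after the fact.

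That last assembly step is where your plan has a genuine gap rather than mere bookkeeping. You claim the discrete freedom (sign of $u_0$, orientation of each $E_i$) is ``exactly enough'' to match~\eqref{eq:g2-canonical-form-exp1}, but the signs $\epsilon_i$ defined by $u_i \times v_i = \epsilon_i u_0$ are \emph{invariant} under continuous rotation within $E_i$, and flipping the sign of $u_0$ flips all three $\epsilon_i$ simultaneously; flipping the orientation of $E_i$ does couple $\epsilon_i \leftrightarrow -\epsilon_i$ but also sends $\omega_i \mapsto -\omega_i$, which you cannot do freely if you want to keep $\omega_1 \geq \omega_2 \geq \omega_3 > 0$. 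So the triple $(\epsilon_1, \epsilon_2, \epsilon_3)$ is determined up to a global sign, and you need to verify it takes the specific value forced by~\eqref{eq:g2-canonical-form-exp1} (in the standard example one finds $(+,-,-)$). This does follow from the sign information in your own computation, namely $v_1 \times v_2 = u_1 \times u_2$, $v_1 \times v_3 = u_1 \times u_3$, $v_2 \times v_3 = -(u_2 \times u_3)$, combined with associativity of the planes $\mathrm{span}\{u_0, u_i, v_i\}$, but you have not carried this out. A cleaner fix is to abandon the attempt to align the $\so(7)$ canonical basis with~\eqref{eq:g2-canonical-form-exp1} and instead, once $\omega_1 = \omega_2 + \omega_3$ is established, define $e_5, e_6, e_7$ by cross products from $e_1, e_2, e_4$ as the paper does; then the desired matrix entries follow from the relations you have already proved. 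Similarly, in your rank-$4$ case you should justify that the quaternion conjugation on $P^\perp$ is realized by an element of $\G$ (it is, via the $\SO{4}$ stabilizer of the associative plane described in Remark~\ref{rmk:different-so4}, but this deserves a sentence).
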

\begin{proof}
Let $X \in \Lambda^2_{14} \subseteq \Lambda^2$. Let $\lambda \geq \nu \geq \mu \geq 0$ be as in Section~\ref{sec:so7-canonical-form}. If $\lambda = 0$, then $X = 0$ and the theorem holds trivially. Thus we can assume from now on that $\lambda > 0$.

There exists an orthonormal basis $\{ e_1, e_2 \}$ for $E_{\lambda}$ such that $X(e_1) = \lambda e_2$ and $X(e_2) = - \lambda e_1$. Let $e_3 = e_1 \times e_2$, so that $\{ e_1, e_2, e_3 \}$ spans an associative $3$-plane $A$. We have
\begin{align*}
X(e_1, e_3) & = \langle X(e_1), e_3 \rangle = \lambda \langle e_2, e_1 \times e_2 \rangle = 0, \\
X(e_2, e_3) & = \langle X(e_2), e_3 \rangle = - \lambda \langle e_1, e_1 \times e_2 \rangle = 0.
\end{align*}
Let $C = A^{\perp}$ be the orthogonal coassocative $4$-plane. Let $v \in C$. Then, using~\eqref{eq:X-in-g2}, we have
\begin{align*}
X(v, e_3) & = X(v, e_1 \times e_2) \\
& = - X(e_1, e_2 \times v) - X(e_2, v \times e_1) \\
& = - \langle X(e_1), e_2 \times v \rangle - \langle X(e_2), v \times e_1 \rangle \\
& = - \lambda \langle e_2, e_2 \times v \rangle + \lambda \langle e_1, v \times e_1 \rangle = 0.
\end{align*}
We have thus shown that $X(v, e_3) = 0$ for any $v$ orthogonal to $e_3$. In particular, if $\cB_{C} = \{ e_4, e_5, e_6, e_7 \}$ is any orthonormal basis for $C$, then with respect to $\cB = \{ e_1, \ldots, e_7 \}$, we have
\begin{equation} \label{eq:g2-canonical-form-temp}
[X]_{\cB} = \begin{pmatrix} \begin{matrix} 0 & - \lambda & 0 \\ \lambda & 0 & 0 \\ 0 & 0 & 0 \end{matrix} & \\ & [\rest{X}{C}]_{{\cB}_C} \end{pmatrix}
\end{equation}
where the skew-symmetric $4 \times 4$ matrix $[\rest{X}{C}]_{{\cB}_C}$ represents the restriction $\rest{X}{C}$ with respect to $\cB_C$. (In passing, we note that if $\mu > 0$, then $e_3$ must lie in the space $E_0$, but this need not hold if $\mu = 0$.)

Note that if we restrict $X$ to $A$, it has characteristic polynomial $t (t^2 + \lambda^2)$. Thus the restriction of $X$ to $C$ has characteristic polynomial $(t^2 + \nu^2)(t^2 + \mu^2)$, with $\nu \geq \mu \geq 0$. That is, we have $C = E_{\nu} \oplus E_{\mu}$. Let $e_4$ be a unit vector in $E_{\nu} \subseteq C$. That is, there exists $\wt e_4 \in E_{\nu}$ such that $\{ e_4, \wt e_4 \}$ is an orthonormal basis of $E_{\nu}$ with
\begin{equation} \label{eq:X-e4}
X (e_4) = \nu \wt e_4, \qquad X(\wt e_4) = - \nu e_4.
\end{equation}
If we define
\begin{equation} \label{eq:e567}
e_5 = e_1 \times e_4, \qquad e_6 = e_2 \times e_4, \qquad e_7 = e_3 \times e_4 = (e_1 \times e_2) \times e_4,
\end{equation}
then $\cB = \{ e_1, \ldots, e_7 \}$ is a $\G$-adapted frame for $\R^7$ in the sense of~\eqref{eq:G2-adapted}. Since $\{ e_1, e_2, e_3 = e_1 \times e_2 \}$ is an oriented orthonormal frame for an associative $3$-plane, we have $e_i \times e_j = e_k$ when $i, j, k$ is a cyclic permutation of $1, 2, 3$. Letting $y = e_4$ and $u, v$ be distinct elements of $\{ e_1, e_2, e_3 \}$ in Proposition~\ref{prop:X-in-g2-identity}, we obtain
$$ X( e_i \times e_4, e_j \times e_4 ) = X(e_i, e_j) + X(e_4, (e_i \times e_j) \times e_4) = X(e_i, e_j) + X(e_4, e_k \times e_4). $$
The above yields the three independent identities
\begin{equation} \label{eq:g2-canonical-form-identities}
X_{56} = -\lambda + X_{47}, \qquad X_{67} = X_{45}, \qquad X_{75} = X_{46}.
\end{equation}

If $\nu = 0$, then $\mu = 0$ as well, and from $C = E_{\nu} \oplus E_{\mu}$, we deduce that $X_{ij} = 0$ for $4 \leq i, j \leq 7$. But then~\eqref{eq:g2-canonical-form-identities} forces $\lambda = 0$, contradicting our assumption that $\lambda > 0$. (That is, we have shown that if $\lambda > 0$, then $\nu > 0$ as well. Hence, $X$ cannot have rank 2, yielding another proof of Corollary~\ref{cor:no-rank-2}.) 

Next, we show that necessarily $X_{45} = X_{46} = 0$. We present two arguments for this. The first, which we give here, is by contradiction, and has a geometric flavour. The second, which is purely algebraic, is given in Remark~\ref{rmk:canonical-form-alternate} below.

Recall from the discussion at the end of the second bullet point of Section~\ref{sec:so7-canonical-form} that if we replace
\begin{align*}
e_1 & \quad \mapsto \quad e_1' = (\cos \alpha) e_1 - (\sin \alpha) e_2, \\
e_2 & \quad \mapsto \quad e_2' = (\sin \alpha) e_1 + (\cos \alpha) e_2,
\end{align*}
which leaves $e_3 = e_1 \times e_2$ invariant, then $[X]_{\cB}$ still has the form~\eqref{eq:g2-canonical-form-temp}. We can exploit this freedom to ensure that $X_{45} = 0$, as follows. Under this rotation of the oriented orthonormal frame $\{ e_1, e_2 \}$, we have
\begin{align*}
e_5 = e_1 \times e_4 & \quad \mapsto \quad e_5' =( (\cos \alpha) e_1 - (\sin \alpha) e_2 ) \times e_4 = (\cos \alpha) e_5 - (\sin \alpha) e_6, \\
e_6 = e_2 \times e_4 & \quad \mapsto \quad e_6' = ( (\sin \alpha) e_1 + (\cos \alpha) e_2 ) \times e_4 =  (\sin \alpha) e_5 + (\cos \alpha) e_6.
\end{align*}
Thus we have
\begin{equation} \label{eq:X45-rotation}
\begin{aligned}
X_{45} = X(e_4, e_5) & \quad \mapsto \quad X_{45}' = X(e_4, e_5') = (\cos \alpha) X_{45} - (\sin \alpha) X_{46}, \\
X_{46} = X(e_4, e_6) & \quad \mapsto \quad X_{46}' = X(e_4, e_6') = (\sin \alpha) X_{45} + (\cos \alpha) X_{46}.
\end{aligned}
\end{equation}
If we initially had $X_{45} = 0$, then we need not rotate the frame. If $X_{45} \neq 0$ for our initial choice, then taking $\cot \alpha = \frac{X_{46}}{X_{45}}$ ensures that in the new frame we have $X'_{45} = 0$. To summarize, we can without loss of generality assume that $X_{45} = 0$, but we have now fixed the choice of $\{ e_1, e_2 \}$ and no longer have the freedom to rotate this frame.

Using $X_{45} = 0$, we compute
$$ 0 = X_{45} = \langle X(e_4), e_5 \rangle = \nu \langle \wt e_4, e_5 \rangle. $$
Since $\nu > 0$, we deduce that $e_5$ is orthogonal to $\wt e_4$. Since $e_5 = e_1 \times e_4$, it is also orthogonal to $e_4$. Thus $e_5 \in C$ lies in the orthogonal complement of $E_{\nu}$, which is $E_{\mu}$. Hence, there exists $\wt e_5 \in E_{\mu}$ such that $\{ e_5, \wt e_5 \}$ is an orthonormal basis of $E_{\mu}$ with
\begin{equation} \label{eq:X-e5}
X (e_5) = \mu \wt e_5, \qquad X(\wt e_5) = - \mu e_5.
\end{equation}
Because both $\{ e_4, e_5, e_6, e_7 \}$ and $\{ e_4, \wt e_4, e_5, \wt e_5 \}$ are orthonormal bases for $C$, it must be the case that there exists an angle $\beta$ such that
\begin{equation} \label{eq:canonical-form-e6e7}
e_6 = (\cos \beta) \wt e_4 \pm (\sin \beta) \wt e_5, \qquad e_7 = - (\sin \beta) \wt e_4 \pm (\cos \beta) \wt e_5,
\end{equation}
where the $\pm$ sign is determined from $e_6 \w e_7 = \pm \wt e_4 \w \wt e_5$.

Using~\eqref{eq:X-e4},~\eqref{eq:X-e5}, and~\eqref{eq:canonical-form-e6e7}, we can compute
\begin{align*}
X_{67} & = \langle X(e_6), e_7 \rangle \\
& = \langle (\cos \beta) X(\wt e_4) \pm (\sin \beta) X(\wt e_5), e_7 \rangle \\
& = \langle - \nu (\cos \beta) e_4 \mp \mu (\sin \beta) e_5, e_7 \rangle = 0,
\end{align*}
confirming the relation $X_{67} = X_{45}$ of~\eqref{eq:g2-canonical-form-identities}, because $X_{45} = 0$. Similarly we have
\begin{align*}
X_{75} & = - X_{57} = - \langle X(e_5), e_7 \rangle = - \mu \langle \wt e_5, e_7 \rangle = - (\pm \mu \cos \beta), \\
X_{46} & = \langle X(e_4), e_6 \rangle = \nu \langle \wt e_4, e_6 \rangle = \nu \cos \beta.
\end{align*}
Thus, the relation $X_{75} = X_{46}$ of~\eqref{eq:g2-canonical-form-identities} becomes
\begin{equation} \label{eq:canonical-form-rel-1}
(\nu \pm \mu) \cos \beta = 0.
\end{equation}
Again, in the same way we compute
\begin{align*}
X_{56} & = \langle X(e_5), e_6 \rangle = \mu \langle \wt e_5, e_6 \rangle = \pm \mu \sin \beta, \\
X_{47} & = \langle X(e_4), e_7 \rangle = \nu \langle \wt e_4, e_7 \rangle = - \nu \sin \beta.
\end{align*}
Thus, the relation $X_{56} = - \lambda + X_{47}$ of~\eqref{eq:g2-canonical-form-identities} becomes
\begin{equation} \label{eq:canonical-form-rel-2}
(\nu \pm \mu) \sin \beta = - \lambda.
\end{equation}
If $\nu \pm \mu = 0$, then~\eqref{eq:canonical-form-rel-2} gives $\lambda = 0$, which is a contradiction. Thus $\nu \pm \mu \neq 0$, so~\eqref{eq:canonical-form-rel-1} gives $\cos \beta = 0$. Thus we must have $|\sin \beta| = 1$. Since $\nu \geq \mu \geq 0$ and $\nu \pm \mu \neq 0$, we always have $\nu \pm \mu > 0$. Moreover, we have $- \lambda < 0$. Hence $|\sin \beta| = 1$ and~\eqref{eq:canonical-form-rel-2} are only consistent if $\sin \beta = - 1$. Substituting $\cos \beta = 0$ and $\sin \beta = - 1$ into the above expressions for the matrix elements, we have established that
\begin{equation*}
X_{45} = X_{67} = 0, \qquad X_{46} = X_{75} = 0, \qquad X_{56} = \mp \mu, \qquad X_{47} = \nu.
\end{equation*}
(As an aside, we note that since $X_{45} = X_{46} = 0$, equation~\eqref{eq:X45-rotation} shows that these entries were necessarily both zero even before we rotated the $\{ e_1, e_2 \}$ frame, so no rotation was necessary. See Remark~\ref{rmk:canonical-form-alternate} for an algebraic proof of this fact.)

Finally, the relation $X_{56} = - \lambda + X_{47}$ of~\eqref{eq:g2-canonical-form-identities} becomes $ \mp \mu = - \lambda + \nu$, which we rewrite as
$$ \lambda = \nu \pm \mu. $$
Since $\lambda \geq \nu \geq \mu \geq 0$, the above is only consistent if we take the $\pm = +$, unless $\mu = 0$ in which case the sign is irrelevant. Thus, in both cases we have $\lambda = \nu + \mu$. To summarize, we have shown that
$$ X_{45} = X_{46} = X_{67} = X_{75} = 0, \qquad X_{56} = -\mu, \qquad X_{47} = \nu, \qquad \text{with $\lambda = \nu + \mu$}. $$
We then have the following three cases:
\begin{enumerate}[(i)]
\item If $\lambda = \mu$, then we necessarily have $\nu = 0$, so $\mu = 0$, and hence $X = 0$.
\item If $\lambda > 0$ and $\mu = 0$, then $\lambda = \nu > 0$ and $\rank X = 4$. Moreover $\ker X$ is spanned by $\{ e_3, e_5, e_6 \}$. Note that by~\eqref{eq:forms-coordinates}, we have $e_5 \times e_6 = -e_3$. Thus $\ker X$ is an associative $3$-plane with oriented orthonormal basis $\{ -e_3, e_5, e_6 \}$.
\item If $\lambda > \mu > 0$, then $\nu > 0$ as well, and $\rank X = 6$. In this case $\ker X$ is spanned by $\{ e_3 \}$.
\end{enumerate}
Since $\nu = \lambda - \mu$, the proof is complete.
\end{proof}

\begin{rmk} \label{rmk:canonical-form-alternate}
In the course of the proof of Theorem~\ref{thm:g2-canonical-form}, we had reduced to the following situation. Let $\nu > 0$, and $X(e_4) = \nu \wt e_4$. Then we ended up showing that $X_{45} = X_{46} = 0$ and $X_{47} = \nu$. Here we give an algebraic demonstration of this fact. For the $4 \times 4$ matrix $[\rest{X}{C}]_{{\cB}_C}$ of~\eqref{eq:g2-canonical-form-temp}, using~\eqref{eq:g2-canonical-form-identities}, we can write
$$ [\rest{X}{C}]_{{\cB}_C} = Y = \begin{pmatrix} 0 & - a & - b & - c \\ a & 0 & -(c+\lambda) & b \\ b & c+\lambda & 0 & -a \\ c & -b & a & 0 \end{pmatrix}. $$
The condition $X(e_4) = \nu \wt e_4$ says that $\nu^2 = a^2 + b^2 + c^2$. Explicitly computing $\det (t I - Y)$ gives the characteristic polynomial of $Y$, which using $\nu^2 = a^2 + b^2 + c^2$ turns out to be
$$ t^4 + (2 \nu^2 + 2 c \lambda + \lambda^2) t^2 + (\nu^2 + c \lambda)^2, $$
with roots
\begin{align*}
t^2 & = - \frac{1}{2} (2 \nu^2 + 2 c \lambda + \lambda^2) \pm \frac{1}{2} \sqrt{ (2 \nu^2 + 2 c \lambda + \lambda^2)^2 - 4 (\nu^2 + c \lambda)^2} \\
& = - \nu^2 - c \lambda - \frac{1}{2} \lambda^2 \pm \frac{1}{2} \sqrt{ (4 \nu^2 + 4 c \lambda) \lambda^2 + \lambda^4}.
\end{align*}
However, we know that the characteristic polynomial of $Y$ is $(t^2 + \nu^2)(t^2 + \mu^2)$, and since $\nu^2 \geq \mu^2$, the smaller root (corresponding to the minus sign above) must equal $- \nu^2$. Thus we have
$$ - \nu^2 = - \nu^2 - c \lambda - \frac{1}{2} \lambda^2 - \frac{1}{2} \sqrt{ (4 \nu^2 + 4 c \lambda) \lambda^2 + \lambda^4}, $$
which can be rearranged to give
\begin{equation} \label{eq:c-f-alternate}
-(2 c \lambda + \lambda^2) = \sqrt{ (4 \nu^2 + 4 c \lambda) \lambda^2 + \lambda^4}.
\end{equation}
Squaring both sides gives
$$ 4 c^2 \lambda^2 + 4 c \lambda^3 + \lambda^4 = (4 \nu^2 + 4 c \lambda) \lambda^2 + \lambda^4, $$
and thus since $\lambda > 0$ we deduce that $c^2 = \nu^2$. This forces $a^2 + b^2 = 0$, so $a=b=0$. Moreover,~\eqref{eq:c-f-alternate} now gives $-(2 c \lambda + \lambda^2) \geq 0$, so $c$ cannot be positive, and thus $c = - \nu$. 
\end{rmk}

\begin{cor} \label{cor:g2-canonical-form}
Let $X \in \Lambda^2_{14} = \lieg$. There exists a $\G$-adapted orthonormal frame $\cB = \{ f_1, \ldots, f_7 \}$ and real numbers $\lambda \geq \nu \geq \mu \geq 0$ with
$$ \lambda = \nu + \mu, $$
such that, with respect to the basis $\cB$, the matrix $[X]_{\cB}$ takes the form
\begin{equation} \label{eq:g2-canonical-form-exp2}
[X]_{\cB} = \begin{pmatrix} 0 & & & \\ & \begin{matrix} 0 & -\mu \\ \mu & 0 \end{matrix} & & \\ & & \begin{matrix} 0 & -\nu \\ \nu & 0 \end{matrix} & \\ & & & \begin{matrix} 0 & -\lambda \\ \lambda & 0 \end{matrix} \end{pmatrix}
\end{equation}
where all nondisplayed entries are zero. Moreover, there are exactly three cases that occur:
\begin{enumerate}[(i)]
\item if $\lambda = \mu$, then $\lambda = \mu = \nu = 0$, and $X = 0$;
\item if $\lambda > 0$ and $\mu = 0$, then $\lambda = \nu$, $\rank X = 4$, and $\ker X$ is the associative $3$-plane spanned by $\{ f_1, f_2, f_3 \}$;
\item if $\lambda > \mu > 0$, then $\rank X = 6$, and $\ker X$ is spanned by $\{ f_1 \}$.
\end{enumerate}
\end{cor}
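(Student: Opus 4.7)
The plan is to derive the corollary directly from Theorem~\ref{thm:g2-canonical-form} by reordering the $\G$-adapted frame (with two sign flips) so that the matrix of $X$ takes the standard sorted $\so(7)$ canonical form~\eqref{eq:so7-canonical-form}, rather than the block shape~\eqref{eq:g2-canonical-form-exp1}. Concretely, I would first apply Theorem~\ref{thm:g2-canonical-form} to obtain a $\G$-adapted frame $\{e_1,\ldots,e_7\}$ and reals $\lambda \geq \mu \geq 0$ realizing~\eqref{eq:g2-canonical-form-exp1}, and set $\nu := \lambda - \mu$. A direct inspection of~\eqref{eq:g2-canonical-form-exp1} shows that the eigenvalue moduli of $X \in \so(7)$ are exactly $\lambda, \nu, \mu$; the proof of Theorem~\ref{thm:g2-canonical-form}, which starts from the sorted $\so(7)$ canonical form of Section~\ref{sec:so7-canonical-form}, moreover arranges matters so that $\lambda \geq \nu \geq \mu \geq 0$, giving the ordering claimed in the corollary together with the relation $\lambda = \nu + \mu$.

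I would then introduce the specific reordered frame
\begin{equation*}
f_1 = -e_3, \quad f_2 = e_5, \quad f_3 = e_6, \quad f_4 = e_7, \quad f_5 = e_4, \quad f_6 = e_2, \quad f_7 = -e_1,
\end{equation*}
and verify two things. First, that $\{f_1, \ldots, f_7\}$ is itself a $\G$-adapted frame, i.e., that the four identities $f_3 = f_1 \times f_2$, $f_5 = f_1 \times f_4$, $f_6 = f_2 \times f_4$, $f_7 = f_3 \times f_4$ all hold. Each of these reduces to a single cross product among the $e_i$'s (namely $e_3 \times e_5$, $e_3 \times e_7$, $e_5 \times e_7$, and $e_6 \times e_7$, which can be read off directly from~\eqref{eq:forms-coordinates} as $-e_6$, $-e_4$, $e_2$, $-e_1$); the two sign flips on $f_1$ and $f_7$ are chosen precisely so that these $\G$-adapted identities come out with the correct signs. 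Second, that the matrix of $X$ in the $f$-frame is~\eqref{eq:g2-canonical-form-exp2}; this is a straightforward substitution, since from~\eqref{eq:g2-canonical-form-exp1} we have $X(e_3) = 0$, $X(e_5) = \mu e_6$, $X(e_6) = -\mu e_5$, $X(e_7) = \nu e_4$, $X(e_4) = -\nu e_7$, $X(e_1) = \lambda e_2$, $X(e_2) = -\lambda e_1$, and each of these immediately translates into one of the $2 \times 2$ blocks of~\eqref{eq:g2-canonical-form-exp2}.

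The three cases (i)--(iii) are then direct translations of the corresponding cases of Theorem~\ref{thm:g2-canonical-form}. In case (ii) one has $\mu = 0$, $\nu = \lambda$, and $\ker X$ is spanned by $\{f_1, f_2, f_3\}$, which equals the span of $\{e_3, e_5, e_6\}$, the associative $3$-plane identified in Theorem~\ref{thm:g2-canonical-form}(ii); moreover $\{f_1, f_2, f_3\}$ is automatically an oriented orthonormal basis of an associative $3$-plane because the $\G$-adapted identity gives $f_3 = f_1 \times f_2$. Case (iii) is immediate, since $\ker X$ is spanned by $\{f_1\}$, equivalently by $\{e_3\}$. The ``main obstacle'' here is purely bookkeeping: all the substantive content is already contained in Theorem~\ref{thm:g2-canonical-form}, and the work consists in identifying one explicit reordering with signs that simultaneously respects the $\G$-adapted relations and sorts the blocks by eigenvalue modulus.
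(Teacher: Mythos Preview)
Your proposal is correct and follows essentially the same approach as the paper: the paper also defines the very same reordered frame $f_1 = -e_3$, $f_2 = e_5$, $f_3 = e_6$, $f_4 = e_7$, $f_5 = e_4$, $f_6 = e_2$, $f_7 = -e_1$, verifies the $\G$-adapted relations via~\eqref{eq:forms-coordinates}, and then reads off~\eqref{eq:g2-canonical-form-exp2} from~\eqref{eq:g2-canonical-form-exp1}. (One minor bookkeeping slip: with the paper's convention $X_{ij} = \langle X(e_i), e_j \rangle$ and $X_{47} = \lambda - \mu$, you should have $X(e_4) = \nu e_7$ and $X(e_7) = -\nu e_4$, not the reverse; this does not affect the argument.)
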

\begin{proof}
From~\eqref{eq:forms-coordinates}, we see that if $\{ e_1, \ldots, e_7 \}$ is a $\G$-adapted frame, then
$$ -e_3 \times e_5 = e_6, \qquad -e_3 \times e_7 = e_4, \qquad e_5 \times e_7 = e_2, \qquad e_6 \times e_7 = - e_1. $$
Thus, if we define
$$ f_1 = - e_3, \quad f_2 = e_5, \quad f_3 = e_6, \quad f_4 = e_7, \quad f_5 = e_4, \quad f_6 = e_2, \quad f_7 = - e_1, $$
then the above relations give
$$ f_1 \times f_2 = f_3, \qquad f_1 \times f_4 = f_5, \qquad f_2 \times f_4 = f_6, \qquad f_3 \times f_4 = f_7, $$
and thus $\{ f_1, \ldots, f_7 \}$ is also a $\G$-adapted frame. Using the canonical form~\eqref{eq:g2-canonical-form-exp1} in terms of $\{ e_1, \ldots, e_7 \}$ and expressing it in terms of the new $\G$-adapted frame $\{ f_1, \ldots, f_7 \}$ defined above, we obtain~\eqref{eq:g2-canonical-form-exp2}. For example, $X(f_6, f_7) = - X(e_2, e_1) = - \lambda$. The three cases are precisely the cases of Theorem~\ref{thm:g2-canonical-form}.
\end{proof}

\begin{rmk} \label{rmk:g2-canonical-form}
Equation~\eqref{eq:g2-canonical-form-exp2} is formally the same as~\eqref{eq:so7-canonical-form}. However, there are two important points to emphasize. Since $\Lambda^2_{14} \subseteq \Lambda^2$, we know that there will exist an orthonormal basis $\cB$ such that $[X]_{\cB}$ takes the form~\eqref{eq:so7-canonical-form}. What is \emph{not obvious}, and what lies at the heart of Theorem~\ref{thm:g2-canonical-form}, is that we can in fact always achieve the form~\eqref{eq:so7-canonical-form} using a \emph{$\G$-adapted frame}. Moreover, we also obtain the fundamental constraint that $\lambda = \nu + \mu$ for elements of $\Lambda^2_{14}$, which rules out the possibility of rank $2$. (Compare with Remark~\ref{rmk:canonical-form-7} for the $\Lambda^2_7$ case.)
\end{rmk}

\begin{rmk} \label{rmk:maximal-torus}
For readers familiar with Lie theory, Theorem~\ref{thm:g2-canonical-form} is nothing more than an explicit demonstration of the maximal torus theorem, at the Lie algebra level, in the case of the exceptional Lie algebra $\lieg$. More precisely, let $\mathfrak{t}$ denote the set of elements in $\lieg$ whose matrix with respect to the standard basis (which is $\G$-adapted) take the form of the right hand side of~\eqref{eq:g2-canonical-form-exp2} with $\lambda = \nu + \mu$. It is easy to check that $\mathfrak{t}$ forms a $2$-dimensional abelian subalgebra of $\lieg$, and in fact $\mathfrak{t}$ is a \emph{maximal} abelian subalgebra. Theorem~\ref{thm:g2-canonical-form} shows that any $X \in \lieg$ is \emph{conjugate} by an element of $\G$ to an element of $\mathfrak{t}$. In the language of Lie theory, this says that $\mathfrak{t}$ is a \emph{maximal torus} for $\lieg$, and the fact that $\dim \mathfrak{t} = 2$ says that the \emph{rank} of $\lieg$ is $2$. The advantage of our proof of Theorem~\ref{thm:g2-canonical-form} is that it uses no abstract Lie theory, but rather explicitly uses the algebra of the cross product on $\R^7$.
\end{rmk}

\begin{rmk} \label{rmk:anton}
In~\cite{Iliashenko}, Anton Iliashenko applies Theorem~\ref{thm:g2-canonical-form} to improve the estimates of Bourguignon--Karcher~\cite{BK} for the eigenvalues of the self-adjoint Riemann curvature operator $\hat{R} \colon \Omega^2 \to \Omega^2$ in the case of \emph{nearly $\G$ manifolds}, when $\hat R$ is restricted to $\Omega^2_{14}$.
\end{rmk}

\section{Associative $3$-planes and $\so(4)$ subalgebras} \label{sec:assoc-so4}

In this section we demonstrate that given an associative $3$-plane $P \subseteq \R^7$, we obtain a Lie subalgebra $\Theta(P)$ of $\so(7) = \Lambda^2 (\R^7)$ that is isomorphic to $\so(4)$. We also completely describe the intersections $\Theta(P) \cap \Theta(Q)$ for two distinct associative $3$-planes $P, Q$.

\subsection{Structure of the splitting $\Lambda^2 = \Lambda^2_7 \oplus \Lambda^2_{14}$} \label{sec:structure-splitting}

Recall the orthogonal decomposition $\Lambda^2 = \Lambda^2_7 \oplus \Lambda^2_{14}$ described in Section~\ref{sec:lambda-2}. We know that $\Lambda^2_{14} = \lieg$, so it is a Lie subalgebra. In fact, it is straightforward to verify directly that $\Lambda^2_{14}$ is closed under the Lie bracket of $\Lambda^2 = \so(7)$, using~\eqref{eq:g2-deriv} and~\eqref{eq:Lambda-2-7-14-frame}. Let us investigate the bracket of an element of $\Lambda^2_{14}$ with an element of $\Lambda^2_7$, or of two elements of $\Lambda^2_7$. In particular, the failure of $\Lambda^2_7$ to be a Lie subalgebra motivates the definition of the object $\Psi_{uv}$ in~\eqref{eq:Psi-defn} below, which plays a crucial role throughout Section~\ref{sec:assoc-so4}.

First, if $X \in \Lambda^2_{14}$ and $u \hk \ph \in \Lambda^2_7$, then using~\eqref{eq:g2-deriv} we have
\begin{align*}
[X, u \hk \ph]_{ij} & = X_{ip} (u \hk \ph)_{pj} - (u \hk \ph)_{ip} X_{pj} \\
& = X_{ip} u_k \ph_{kpj} - u_k \ph_{kip} X_{pj} \\
& = u_k ( X_{ip} \ph_{pjk} + X_{jp} \ph_{ipk} ) \\
& = u_k ( - X_{kp} \ph_{ijp} ) \\
& = (X_{pk} u_k) \ph_{pij}.  
\end{align*}
Thus we have shown that
$$ \text{$[X, u \hk \ph] = X(u) \hk \ph$ for $X \in \Lambda^2_{14}$. In particular, $[\Lambda^2_{14}, \Lambda^2_7] \subseteq \Lambda^2_7$.} $$
Note that an invariant (frame-independent) proof of the above also follows immediately from~\eqref{eq:X-in-g2-alt} and the fact that $(u \hk \ph) w = u \times w$.

It remains to consider the Lie bracket of two elements of $\Lambda^2_7$. We compute
\begin{align*}
[ u \hk \ph, v \hk \ph ]_{ij} & = (u \hk \ph)_{ik} (v \hk \ph)_{kj} - (v \hk \ph)_{ik} (u \hk \ph)_{kj} \\
& = (u_p \ph_{pik}) (v_q \ph_{qkj}) - (v_q \ph_{qik}) (u_p \ph_{pkj}) \\
& = u_p v_q \ph_{pik} \ph_{jqk} - u_p v_q \ph_{pjk} \ph_{iqk}.
\end{align*}
Applying~\eqref{eq:fundamental}, we obtain
\begin{align} \nonumber
[ u \hk \ph, v \hk \ph ]_{ij} & = u_p v_q ( \delta_{pj} \delta_{iq} - \delta_{pq} \delta_{ij} - \ps_{pijq} ) - u_p v_q ( \delta_{pi} \delta_{jq} - \delta_{pq} \delta_{ji} - \ps_{pjiq} ) \\ \label{eq:77-temp}
& = u_j v_i - u_i v_j - 2 u_p v_q \ps_{pqij}.
\end{align}
In order to understand equation~\eqref{eq:77-temp}, we are motivated to make the following definition.

Let $u, v \in \R^7$. Define $\Psi_{uv} \in \Lambda^2$ by
\begin{equation} \label{eq:Psi-defn}
\Psi_{uv} := v \hk u \hk \ps = \sta (u \w v \w \ph).
\end{equation}
In terms of a local orthonormal frame, we have
\begin{equation} \label{eq:Psi-uv-coords}
(\Psi_{uv})_{ij} = u_p v_q \ps_{pqij}.
\end{equation}
As a $2$-form, $X = u \w v$ has components $X_{pq} = u_p v_q - u_q v_p$, so
$$ \ps_{ijpq} X_{pq} = (u_p v_q - u_q v_p) \ps_{pqij} = 2 u_p v_q \ps_{pqij}. $$
Thus, comparing the above two equations with~\eqref{eq:Pbeta}, we deduce that
\begin{equation} \label{eq:Psi-uv}
\Psi_{uv} = - 2 (u \w v)_7 + (u \w v)_{14}.
\end{equation}
From~\eqref{eq:Psi-uv} and $u \w v = (u \w v)_7 + (u \w v)_{14}$ we obtain
\begin{equation} \label{eq:uv-components}
( u \w v )_7 = \tfrac{1}{3} ( u \w v - \Psi_{uv}), \qquad ( u \w v )_{14} = \tfrac{1}{3} ( 2 u \w v + \Psi_{uv} ).
\end{equation}

Returning now to equation~\eqref{eq:77-temp}, and using~\eqref{eq:Psi-uv-coords} and~\eqref{eq:Psi-uv}, we find that
\begin{align*}
[ u \hk \ph, v \hk \ph ] & = - (u \w v) - 2 \Psi_{uv} \\
& = - (u \w v)_7 - (u \w v)_{14} -2 ( -2 (u \w v)_7 + (u \w v)_{14} ),
\end{align*}
and hence we conclude that
\begin{equation} \label{eq:77-temp2}
[ u \hk \ph, v \hk \ph ] = 3 (u \w v)_7 - 3 (u \w v)_{14}.
\end{equation}
Equation~\eqref{eq:77-temp2} shows that $[ u \hk \ph, v \hk \ph ]$ in general has components in both $\Lambda^2_7$ \emph{and} $\Lambda^2_{14}$. In fact, unless $u, v$ are linearly dependent where the bracket is zero, the bracket $[ u \hk \ph, v \hk \ph ]$ always has \emph{nonzero} components in both $\Lambda^2_7$ and $\Lambda^2_{14}$. To see this, suppose that $u \w v$ is either purely type $\Lambda^2_7$ or $\Lambda^2_{14}$. Then we have
$$ u \w v \w \ph = c \sta (u \w v) $$
for $c = -2, 1$, respectively. Wedging both sides with $u \w v$ gives $0 = c |u \w v|^2$, so $u \w v = 0$, and $u, v$ are linearly dependent.

From~\eqref{eq:uv-components} and~\eqref{eq:77-temp2} we obtain
\begin{equation} \label{eq:77-summary}
\begin{aligned}
[ u \hk \ph, v \hk \ph ] & = - u \w v - 2 \Psi_{uv}, \\
\pi_7 ( [ u \hk \ph, v \hk \ph ] ) & = u \w v - \Psi_{uv}, \\
\pi_{14} ( [ u \hk \ph, v \hk \ph ] ) & = - 2 u \w v - \Psi_{uv},
\end{aligned}
\end{equation}
giving the components of the bracket $[u \hk \ph, v \hk \ph]$ explicitly in terms of $u \w v$ and $\Psi_{uv}$.

Another relation which we use in Section~\ref{sec:Psi-identities} is the following, which follows immediately from the first equation in~\eqref{eq:uv-components} and equation~\eqref{eq:pi7-vw}, namely
\begin{equation} \label{eq:Psi-identity}
u \w v = (u \times v) \hk \ph + \Psi_{uv}.
\end{equation}
Finally, from~\eqref{eq:77-temp2} and~\eqref{eq:pi7-vw} we obtain the relation
\begin{equation} \label{eq:pi7-of-77}
\pi_7 ( [ u \hk \ph, v \hk \ph ] ) = ( u \times v ) \hk \ph.
\end{equation}

\subsection{Identities satisfied by $\Psi_{uv}$} \label{sec:Psi-identities}

Recall the definition~\eqref{eq:Psi-defn} of $\Psi_{uv} \in \Lambda^2$ for any $u, v \in \R^7$. We establish several important identities satisfied by $\Psi_{uv}$.

\begin{lemma} \label{lemma:Psi-inner-product}
Let $u, v, w, y \in \R^7$. Then we have
\begin{equation} \label{eq:Psi-inner-product}
\langle \Psi_{uv}, \Psi_{wy} \rangle = 4 \langle u \w v, w \w y \rangle - 2 \ps(u, v, w, y).
\end{equation}
In particular, if $\{ u, v, w \}$ is orthonormal in $\R^7$, then $\{ \frac{1}{2} \Psi_{vw}, \frac{1}{2} \Psi_{wu}, \frac{1}{2} \Psi_{uv} \}$ is orthonormal in $\Lambda^2$. We also have
\begin{equation} \label{eq:Psi-inner-product-b}
\langle \Psi_{uv}, w \w y \rangle = 2 \ps(u, v, w, y).
\end{equation}
\end{lemma}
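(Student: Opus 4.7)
The plan is a direct coordinate computation, with the $\psi\psi$ contraction identity doing essentially all of the work. I would start from the local coordinate expression $(\Psi_{uv})_{ij} = u_p v_q \ps_{pqij}$ given in~\eqref{eq:Psi-uv-coords}. To establish~\eqref{eq:Psi-inner-product}, I would express $\langle \Psi_{uv}, \Psi_{wy} \rangle$ as an index contraction of the components of $\Psi_{uv}$ and $\Psi_{wy}$ and then apply the identity $\ps_{pqij}\ps_{abij} = 4 \delta_{pa}\delta_{qb} - 4 \delta_{pb}\delta_{qa} - 2 \ps_{pqab}$ from the second line of~\eqref{eq:psps}. The two Kronecker-delta terms, after contraction with $u_p v_q w_a y_b$, reassemble into the bilinear form $\langle u, w\rangle\langle v, y\rangle - \langle u, y\rangle\langle v, w\rangle$, which is exactly $\langle u \w v, w \w y \rangle$ under the standard inner product on $\Lambda^2$; the remaining $\ps_{pqab}$ term contracts to $\ps(u,v,w,y)$. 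Tracking the overall constants (dictated by the paper's convention for the inner product on $\Lambda^2$) yields the stated coefficients $4$ and $-2$.

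For the orthonormality assertion, I would specialize~\eqref{eq:Psi-inner-product} to $u,v,w$ orthonormal. The diagonal case $\langle \Psi_{uv}, \Psi_{uv}\rangle$ gives $4|u\w v|^2 - 2\ps(u,v,u,v) = 4$, since $|u\w v|^2 = 1$ for an orthonormal pair and $\ps$ vanishes whenever two of its arguments coincide. This gives $|\tfrac{1}{2}\Psi_{uv}|^2 = 1$. For the off-diagonal pairings such as $\langle \Psi_{vw}, \Psi_{wu}\rangle$, both contributions on the right-hand side vanish: $\langle v\w w, w\w u\rangle = \langle v,w\rangle\langle w,u\rangle - \langle v,u\rangle\langle w,w\rangle = 0$ by orthonormality, and $\ps(v,w,w,u) = 0$ by skew-symmetry of $\ps$. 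The other two cross-pairings are handled identically (or by cyclic permutation), so $\{\tfrac{1}{2}\Psi_{vw}, \tfrac{1}{2}\Psi_{wu}, \tfrac{1}{2}\Psi_{uv}\}$ is orthonormal in $\Lambda^2$.

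For~\eqref{eq:Psi-inner-product-b}, I would expand $(w\w y)_{ij} = w_i y_j - w_j y_i$ and contract directly with $(\Psi_{uv})_{ij} = u_p v_q \ps_{pqij}$. The skew-symmetry of $\ps$ in its last two indices causes the two terms $w_i y_j$ and $-w_j y_i$ to contribute equally, so the contraction collapses to $2 u_p v_q w_i y_j \ps_{pqij} = 2\ps(u,v,w,y)$ (again up to the inner product normalization, which gives the displayed constant).

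There is no real conceptual obstacle here: once the key contraction identity from~\eqref{eq:psps} is available, the argument is pure bookkeeping. The only subtlety worth flagging in the writeup is consistency with the paper's chosen normalization of the inner product on $\Lambda^2$, which fixes the numerical coefficients; beyond that, every step is a one-line manipulation.
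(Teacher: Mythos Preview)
Your proposal is correct and follows essentially the same argument as the paper: the paper computes $\langle \Psi_{uv}, \Psi_{wy} \rangle$ in coordinates via~\eqref{eq:Psi-uv-coords} and the second line of~\eqref{eq:psps}, deduces the orthonormality statement from the vanishing of the $\ps$ term, and obtains~\eqref{eq:Psi-inner-product-b} directly from~\eqref{eq:Psi-uv-coords} and $(w\w y)_{ij}=w_i y_j - w_j y_i$. Your only addition is a more explicit treatment of the off-diagonal orthogonality and an (appropriate) caveat about the inner product normalization, but the method is the same.
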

\begin{proof}
Using~\eqref{eq:Psi-uv-coords} and~\eqref{eq:psps}, we compute
\begin{align*}
\langle \Psi_{uv}, \Psi_{wy} \rangle & = (u_a v_b \ps_{abij}) (w_p y_q \ps_{pqij}) \\
& = u_a v_b w_p y_q (4 \delta_{ap} \delta_{bq} - 4 \delta_{aq} \delta_{bp} - 2 \ps_{abpq}) \\
& = 4 \langle u, w \rangle \langle v, y \rangle - 4 \langle u, y \rangle \langle v, w \rangle - 2 \ps(u, v, w, y),
\end{align*}
which is precisely~\eqref{eq:Psi-inner-product}. The second statement follows, since the $\ps$ term vanishes in each case. Finally, equation~\eqref{eq:Psi-inner-product-b} is immediate from~\eqref{eq:Psi-uv-coords} and $(w \w y)_{ij} = w_i y_j - w_j y_i$.
\end{proof}

\begin{lemma} \label{lemma:identities}
Let $u, v, w \in \R^7$ be \emph{orthogonal}. The following identities hold:
\begin{align} \label{eq:identity-1}
(u \times w) \times (v \times w) & = 2 \ph(u,v,w) w - |w|^2 u \times v, \\ \label{eq:identity-2}
\Psi_{(u \times w)(v \times w)} & = \ps(u,v,w) \w w + |w|^2 u \w v - \ph(u,v,w) w \hk \ph, \\ \label{eq:identity-3}
(u \times w) \w (v \times w) & = \ph(u,v,w) w \hk \ph + \ps(u,v,w) \w w + |w|^2 \Psi_{uv}.
\end{align}
\end{lemma}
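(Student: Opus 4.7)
The plan is to prove the three identities in the order (i), (ii), (iii), with (iii) deduced from (i) and (ii) via the general relation \eqref{eq:Psi-identity}.

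For (i), I apply the iterated cross product identity \eqref{eq:iterated-cross} with $a = u \times w$, $b = v$, $c = w$. Because the cross product is orthogonal to its factors, $\langle u \times w, w\rangle = 0$, and by skew-symmetry of $\ph$ we have $-\langle u \times w, v\rangle w = -\ph(u, w, v) w = \ph(u,v,w) w$. So it remains to show $\ps(u \times w, v, w) = \ph(u,v,w) w - |w|^2 u \times v$. Writing $(u \times w)_i = u_a w_b \ph_{abi}$ and applying \eqref{eq:phps} to the contraction $\ph_{abi}\ps_{ijkl}$ (with a cyclic sign adjustment to bring the contracted index into the last slot of $\ps$) yields six terms. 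The orthogonality hypothesis kills the terms containing $\langle u, v\rangle$, $\langle u, w\rangle$, or $\langle v, w\rangle$; and two further terms vanish because the symmetric factor $w_b w_k$ is paired with a $\ph$ antisymmetric in those indices. The two surviving contributions reassemble into the stated formula, completing (i).

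For (ii), I compute $(\Psi_{(u\times w)(v\times w)})_{ij} = u_a v_c w_b w_d \, \ph_{abp} \ph_{cdq} \ps_{pqij}$ directly in components. After reordering $\ps_{pqij}$ so that the contracted index $p$ sits in the last slot, \eqref{eq:phps} turns $\ph_{abp} \ps_{pqij}$ into six $\delta \cdot \ph$ terms. Multiplying each by the remaining factor $u_a v_c w_b w_d \ph_{cdq}$ produces a double $\ph \ph$ contraction that collapses via \eqref{eq:fundamental}. The antisymmetry of $\ph$ and $\ps$ in their indices, combined with the symmetry of $w_b w_d$, kills most subterms outright, and the orthogonality of $\{u, v, w\}$ kills every surviving inner-product term. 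Three nontrivial contributions remain, and upon reassembly (recognizing residuals like $u_a v_c w_d \ps_{acdl}$ as $\ps(u,v,w)_l$ after the appropriate cyclic-permutation sign) they give exactly $\ps(u,v,w) \w w + |w|^2 u \w v - \ph(u,v,w) w \hk \ph$.

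Given (i) and (ii), identity (iii) follows in one line. Applying \eqref{eq:Psi-identity} to the pair $(u \times w, v \times w)$ gives
\[ (u \times w) \w (v \times w) = \bigl((u\times w) \times (v \times w)\bigr) \hk \ph + \Psi_{(u \times w)(v \times w)}. \]
Substituting (i) and (ii), the coefficient of $w \hk \ph$ becomes $2 - 1 = 1$, and the two $|w|^2$ contributions collect into $|w|^2\bigl[u \w v - (u \times v) \hk \ph\bigr] = |w|^2 \Psi_{uv}$ by another application of \eqref{eq:Psi-identity}, yielding exactly the right-hand side of (iii).

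The main obstacle is the bookkeeping in (ii): one must manage the cyclic-permutation signs that arise from reordering the four indices of $\ps$, and at every step watch for the many symmetric-antisymmetric cancellations produced by the $w_b w_d$ factor. With systematic indexing the computation is routine, but it is easy to drop a sign and end up off by a sign on one of the three surviving summands.
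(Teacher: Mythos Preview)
Your proposal is correct and follows essentially the same route as the paper: identity (i) via \eqref{eq:iterated-cross} followed by the $\ph$--$\ps$ contraction \eqref{eq:phps}, identity (ii) by a direct component computation using \eqref{eq:phps} and then \eqref{eq:fundamental}, and identity (iii) by applying \eqref{eq:Psi-identity} to the pair $(u\times w, v\times w)$ and substituting (i) and (ii). The only cosmetic difference is index labelling; the logical structure is identical.
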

\begin{proof}
Replacing $u$ with $u \times w$ in~\eqref{eq:iterated-cross}, we have
\begin{align} \nonumber
(u \times w) \times (v \times w) & = - \langle u \times w, v \rangle w + \langle u \times w, w \rangle + \ps(u \times w, v, w) \\ \label{eq:identities-temp1}
& = \ph(u,v,w) w + 0 + \ps(u \times w, v, w).
\end{align}
Expanding the last term of~\eqref{eq:iterated-cross} with $u \times w = u_a w_b \ph_{abp} e_p$, we have
$$ \ps(u \times w, v, w) = u_a w_b \ph_{abp} \ps(e_p, v, w) = u_a w_b v_i w_j \ph_{abp} \ps_{pijk} e_k. $$
Using the first identity of~\eqref{eq:phps}, the above becomes
\begin{align*}
\ps(u \times w, v, w) & = - u_a w_b v_i w_j (\ph_{abp} \ps_{ijkp}) e_k \\
& = - u_a w_b v_i w_j ( \delta_{ai} \ph_{bjk} + \delta_{aj} \ph_{ibk} + \delta_{ak} \ph_{ijb} - \delta_{bi} \ph_{ajk} - \delta_{bj} \ph_{iak} - \delta_{bk} \ph_{ija} ) e_k.
\end{align*}
Using the skew-symmetry of $\ph$ and the orthogonality of $u, v, w$, the first four terms vanish, leaving
$$ \ps(u \times w, v, w) = |w|^2 v \times u + \ph(v, w, u) w = - |w|^2 u \times v + \ph(u, v, w) w. $$
Substituting the above into~\eqref{eq:identities-temp1} yields~\eqref{eq:identity-1}.

Similarly, we compute
\begin{align*}
( \Psi_{(u \times w)(v \times w)} )_{ab} & = \ps( u \times w, v \times w, e_a, e_b ) = u_i w_j \ph_{ijk} v_l w_m \ph_{lmn} \ps_{knab} \\
& = u_i w_j v_l w_m \ph_{ijk} (\ph_{lmn} \ps_{kabn}) \\
& = u_i w_j v_l w_m \ph_{ijk} (\delta_{lk} \ph_{mab} + \delta_{la} \ph_{kmb} + \delta_{lb} \ph_{kam} - \delta_{mk} \ph_{lab} - \delta_{ma} \ph_{klb} - \delta_{mb} \ph_{kal}) \\
& = \ph(u, w, v) w_m \ph_{mab} + u_i w_j v_a w_m \ph_{ijk} \ph_{mbk} + u_i w_j v_b w_m \ph_{ijk} \ph_{amk} \\
& \qquad {} - \ph(u, w, w) v_l \ph_{lab} - u_i w_j v_l w_a \ph_{ijk} \ph_{lbk} - u_i w_j v_l w_b \ph_{ijk} \ph_{alk}.
\end{align*}
The fourth term above vanishes. Applying~\eqref{eq:fundamental} four times, we have
\begin{align*}
( \Psi_{(u \times w)(v \times w)} )_{ab} & = - \ph(u, v, w) (w \hk \ph)_{ab} + u_i w_j v_a w_m (\delta_{im} \delta_{jb} - \delta_{ib} \delta_{jm} - \ps_{ijmb}) \\
& \qquad {} + u_i w_j v_b w_m (\delta_{ia} \delta_{jm} - \delta_{im} \delta_{ja} - \ps_{ijam}) - u_i w_j v_l w_a (\delta_{il} \delta_{jb} - \delta_{ib} \delta_{jl} - \ps_{ijlb}) \\
& \qquad {} - u_i w_j v_l w_b (\delta_{ia} \delta_{jl} - \delta_{il} \delta_{ja} - \ps_{ijal}).
\end{align*}
Using the orthogonality of $u, v, w$ and the skew-symmetry of $\ps$, most of the terms above vanish. We are left with
\begin{align*}
( \Psi_{(u \times w)(v \times w)} )_{ab} & = - \ph(u, v, w) (w \hk \ph)_{ab} - |w|^2 u_b v_a + |w|^2 u_a v_b + w_a \ps(u, w, v)_b - w_b \ps(u,w,v)_a \\
& = - \ph(u, v, w) (w \hk \ph)_{ab} + |w|^2 (u \w v)_{ab} + (\ps(u, v, w) \w w)_{ab},
\end{align*}
which is the component form of~\eqref{eq:identity-2}.

Next, using~\eqref{eq:Psi-identity} with $u$ replaced by $u \times w$ and $v$ replaced by $v \times w$, we have
$$ (u \times w) \w (v \times w) = \big( (u \times w) \times (v \times w) \big) \hk \ph + \Psi_{(u \times w)(v \times w)}. $$
Substituting the identities~\eqref{eq:identity-1} and~\eqref{eq:identity-2}, this becomes
\begin{align*}
(u \times w) \w (v \times w) & = ( 2 \ph(u, v, w) w - |w|^2 u \times v) \hk \ph \\
& \qquad {} + \ps(u,v,w) \w w + |w|^2 u \w v - \ph(u,v,w) w \hk \ph \\
& = \ph(u, v, w) w \hk \ph + \ps(u, v, w) \w w + |w|^2 \big( u \w v - (u \times v) \hk \ph \big).
\end{align*}
Equation~\eqref{eq:identity-3} now follows from the above and~\eqref{eq:Psi-identity}.
\end{proof}

The key result that enables us to construct $\Theta(P)$ in Section~\ref{sec:Theta-P} is the following identity.

\begin{prop} \label{prop:main-identity}
Let $u, v, w \in \R^7$ be \emph{orthogonal}. Under the identification between $2$-forms and skew-adjoint endomorphisms, the commutator $[ \Psi_{uw}, \Psi_{vw} ]$ satisfies the remarkable identity
\begin{equation} \label{eq:main-identity}
[ \Psi_{uw}, \Psi_{vw} ] = - |w|^2 \Psi_{uv} - \Psi_{(u \times w)(v \times w)}.
\end{equation}
\end{prop}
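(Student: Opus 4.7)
The plan is to prove~\eqref{eq:main-identity} by direct computation in index notation using the $\psi$-$\psi$ contraction identity~\eqref{eq:psps}. First I would write
\[
[\Psi_{uw}, \Psi_{vw}]_{ij} = u_p w_q v_a w_b \bigl( \psi_{pqik} \psi_{abkj} - \psi_{abik} \psi_{pqkj} \bigr),
\]
and use the skew-symmetry of $\psi$ to place the summed index $k$ in the last slot of each factor, so that~\eqref{eq:psps} can be applied directly to each of the two products. This yields an expansion with three kinds of contributions: (a) $\phi \phi$ terms; (b) triple-$\delta$ terms; and (c) $\psi$ terms with three free indices.

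Next I would exploit the orthogonality hypotheses $\langle u,v\rangle = \langle u,w\rangle = \langle v,w\rangle = 0$, which collapse many of the triple-$\delta$ terms and force many cross-contractions of the vectors to vanish. What I expect to survive, after combining the two halves of the commutator, are three groups of terms: a piece of the form $|w|^2 u_p v_q \psi_{pqij}$, which is manifestly $|w|^2 (\Psi_{uv})_{ij}$ and accounts for the first summand on the right-hand side of~\eqref{eq:main-identity}; a $\delta$-piece proportional to $|w|^2 (u_i v_j - u_j v_i) = |w|^2 (u \w v)_{ij}$; and $\phi\phi$ and residual $\psi$ pieces that I expect to reassemble, using~\eqref{eq:fundamental} where needed, into $\phi(u,v,w)(w \hk \ph)_{ij}$ and $(\psi(u,v,w) \w w)_{ij}$. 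Comparison with~\eqref{eq:identity-2} then shows that these last three contributions combine to give precisely $\Psi_{(u\times w)(v \times w)}$, producing the second summand of~\eqref{eq:main-identity}.

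The main obstacle is combinatorial: each application of~\eqref{eq:psps} contributes fourteen terms, so the total expansion has roughly fifty-six raw summands. Some care is needed to keep track of signs introduced when reordering the indices of $\psi$, and to verify that the various $\phi\phi$ and $\delta$-triple pieces do indeed reassemble into exactly the shape dictated by~\eqref{eq:identity-2}, rather than an unwanted remainder.

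As an alternative route one could decompose $\Psi_{uw} = -2(u \w w)_7 + (u \w w)_{14}$ via~\eqref{eq:Psi-uv}, expand $[\Psi_{uw}, \Psi_{vw}]$ into four brackets, and evaluate each using~\eqref{eq:77-summary}, the derivation property~\eqref{eq:X-in-g2-alt}, and the identity $[X, y \hk \ph] = X(y) \hk \ph$ established at the start of Section~\ref{sec:structure-splitting}. This avoids~\eqref{eq:psps} at the cost of having to compute the action of the $\Lambda^2_{14}$-piece of $u \w w$ on the vector $v \times w$, which in turn requires~\eqref{eq:identity-1}; the identity~\eqref{eq:identity-2} is then used at the end to repackage the answer. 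I would choose whichever of these two routes is more concise in full, but expect the direct coordinate proof to be the cleaner of the two.
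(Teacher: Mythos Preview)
Your primary route is exactly the paper's approach: expand $[\Psi_{uw},\Psi_{vw}]$ in coordinates, apply the $\psi$--$\psi$ contraction~\eqref{eq:psps}, and kill terms using orthogonality and antisymmetry. One correction to your expectations: after the dust settles, \emph{no} $|w|^2\Psi_{uv}$ term survives directly, because every single $\psi$ term on the third line of~\eqref{eq:psps} dies (each picks up either an inner product $\langle u,v\rangle$, $\langle u,w\rangle$, $\langle v,w\rangle$, or a symmetric $w_j w_b$ contraction against antisymmetric $\psi$ indices). What actually remains is
\[
-[\Psi_{uw},\Psi_{vw}] \;=\; (u\times w)\wedge(v\times w)\;-\;2\,\varphi(u,v,w)\,w\hk\varphi\;+\;|w|^2\,u\wedge v,
\]
so to extract $|w|^2\Psi_{uv}$ you must invoke~\eqref{eq:identity-3} to rewrite $(u\times w)\wedge(v\times w)$, and then~\eqref{eq:identity-2} to identify the leftover as $\Psi_{(u\times w)(v\times w)}$; using~\eqref{eq:identity-2} alone is not enough.
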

\begin{proof}
We have
\begin{align*}
- [ \Psi_{uw}, \Psi_{vw} ]_{kc} & = - (\Psi_{uw})_{kl} (\Psi_{vw})_{lc} + (\Psi_{vw})_{kl} (\Psi_{uw})_{lc} \\
& = - (u_i w_j \ps_{ijkl}) (v_a w_b \ps_{ablc}) + (v_a w_b \ps_{abkl}) (u_i w_j \ps_{ijlc}) \\
& = u_i w_j v_a w_b \ps_{ijkl} \ps_{abcl} - u_i w_j v_a w_b \ps_{ijcl} \ps_{abkl}.
\end{align*}
The second term above (apart from the minus sign) is identical to the first term after interchanging $k \leftrightarrow c$. As shorthand, we write
\begin{equation} \label{eq:main-identity-temp}
 - [ \Psi_{uw}, \Psi_{vw} ]_{kc} = u_i w_j v_a w_b \ps_{ijkl} \ps_{abcl} - (k \leftrightarrow c).
 \end{equation}
Using the identity~\eqref{eq:psps}, the first term on the right hand side above is
\begin{align*}
u_i w_j v_a w_b & \big( - \ph_{ajk} \ph_{ibc} - \ph_{iak} \ph_{jbc} - \ph_{ija} \ph_{kbc} \\
& \qquad {} + \delta_{ia} \delta_{jb} \delta_{kc} + \delta_{ib} \delta_{jc} \delta_{ka} + \delta_{ic} \delta_{ja} \delta_{kb} - \delta_{ia} \delta_{jc} \delta_{kb} - \delta_{ib} \delta_{ja} \delta_{kc} - \delta_{ic} \delta_{jb} \delta_{ka} \\
& \qquad {} - \delta_{ia} \ps_{jkbc} - \delta_{ja} \ps_{kibc} - \delta_{ka} \ps_{ijbc} + \delta_{ab} \ps_{ijkc} - \delta_{ac} \ps_{ijkb} \big).
\end{align*}
By the orthogonality of $u, v, w$ and the skew-symmetry of $\ps$ and $\ph$, the second term on the first line vanishes, all the terms except the last one on the second line vanish, and all the terms on the third line vanish. We are left with
\begin{align*}
& \qquad {} u_i w_j v_a w_b (- \ph_{ajk} \ph_{ibc} - \ph_{ija} \ph_{kbc} - \delta_{ic} \delta_{jb} \delta_{ka}) \\
& =-  (v \times w)_k (u \times w)_c + \ph(u, w, v) (w \hk \ph)_{kc} - |w|^2 u_c v_k.
\end{align*}
Substituting the above into~\eqref{eq:main-identity-temp}, we find that
\begin{align*}
 - [ \Psi_{uw}, \Psi_{vw} ]_{kc} & = - (v \times w)_k (u \times w)_c + (v \times w)_c (u \times w)_k \\
 & \qquad {} + 2 \ph(u, w, v) (w \hk \ph)_{kc} + |w|^2 (u_k v_c - u_c v_k),
\end{align*}
which is the component form of
\begin{equation} \label{eq:main-identity-temp2}
 - [ \Psi_{uw}, \Psi_{vw} ] = (u \times w) \w (v \times w) - 2 \ph(u, v, w) w \hk \ph + |w|^2 u \w v.
\end{equation}
Substituting~\eqref{eq:identity-3} into the above, we obtain
$$ - [ \Psi_{uw}, \Psi_{vw} ] = \ps(u, v, w) \w w + |w|^2 \Psi_{uv} - \ph(u, v, w) w \hk \ph + |w|^2 u \w v. $$
Subtracting~\eqref{eq:identity-2} from the above, we are left with
$$ - [ \Psi_{uw}, \Psi_{vw} ] - \Psi_{(u \times w)(v \times w)} = |w|^2 \Psi_{uv}, $$
which is precisely~\eqref{eq:main-identity}.
\end{proof}

\subsection{The $\so(3)$ subalgebra $\Psi(P)$ of an associative $3$-plane $P$} \label{sec:Psi-P}

Let $P$ be a $3$-plane in $\R^7$, that is a $3$-dimensional linear subspace. Then $\Lambda^2 (P)$ is a $3$-dimensional linear subspace of $\so(7) = \Lambda^2 (\R^7)$. Let $\{ u, v, w \}$ be an orthonormal basis for $P$. Then $\{ v \w w, w \w u, u \w v \}$ is an orthonormal basis for $\Lambda^2 (P)$. Moreover, using~\eqref{eq:2form-map}, for any $y \in P$ we have
\begin{align*}
[ u \w v, w \w v] y & = (u \w v) (w \w v) (y) - ( u \leftrightarrow w) \\
& = (u \w v) ( \langle w, y \rangle v - \langle v, y \rangle w ) - ( u \leftrightarrow w) \\ 
& = \langle w, y \rangle (\langle u, v \rangle v - \langle v, v \rangle u) - \langle v, y \rangle (\langle u, w \rangle v - \langle v, w \rangle u) - ( u \leftrightarrow w) \\
& = - \langle w, y \rangle u + \langle u, y \rangle w \\
& = (u \w w) y,
\end{align*}
and similarly for cyclic permutations of $u, v, w$, yielding
$$ [ u \w v, v \w w ] = w \w u, \qquad [ v \w w, w \w u ] = u \w v, \qquad [ w \w u, u \w v ] = w \w v. $$
If we define $X_1 = v \w w$, $X_2 = w \w u$, and $X_3 = u \w v$, then the above is nothing more than the well-known fact that $\Lambda^2 (P) = \mathrm{Span} \{ X_1, X_2, X_3 \}$ is a Lie subalgebra of $\Lambda^2 (\R^7)$, isomorphic to $\so(3)$, since $[X_i, X_j] = X_k$ for $i, j, k$ a cyclic permutation of $1,2,3$.

Given the $\G$-structure $\ph$ on $\R^7$, we claim that $P$ determines \emph{another} $\so(3)$ subalgebra $\Psi(P)$ of $\so(7)$, which is orthogonal to $\Lambda^2 (P)$. This is obtained as follows. Define a linear map $\Psi \colon \Lambda^2 \to \Lambda^2$ by
$$ \Psi(\beta) = \sta (\beta \w \ph). $$
In fact,~\eqref{eq:Lambda-2-7-14} says that $\Psi$ is a linear isomorphism with $\Psi = -2 \pi_7 + \pi_{14}$. Note also from~\eqref{eq:Psi-defn} that $\Psi(u \w v) = \Psi_{uv}$. Since $\Psi$ is linear, it maps the $3$-dimensional subspace $\Lambda^2 (P)$ of $\Lambda^2 = \Lambda^2(\R^7)$ onto a $3$-dimensional subspace $\Psi (\Lambda^2 (P))$, which we denote by $\Psi (P)$ for simplicity. In fact, since any $2$-form on $P$ is decomposable, \emph{every} element of $\Psi(P)$ is of the form $\Psi_{uv}$ for some (non-unique) $u, v \in P$. Thus,
\begin{equation} \label{eq:Psi-P-defn}
\Psi(P) = \{ \Psi_{uv} \mid u, v \in P \}.
\end{equation}
By~\eqref{eq:Psi-P-defn} and Lemma~\ref{lemma:Psi-inner-product}, the set $\{ \frac{1}{2} \Psi_{vw}, \frac{1}{2} \Psi_{wu}, \frac{1}{2} \Psi_{uv} \}$ is an orthonormal basis of $\Psi(P)$.

\begin{lemma} \label{lemma:Psi-P-orthogonal}
Let $P$ be a $3$-plane in $\R^7$. The two $3$-dimensional subspaces $\Lambda^2 (P)$ and $\Psi(P)$ of $\Lambda^2 (\R^7)$ are orthogonal to each other.
\end{lemma}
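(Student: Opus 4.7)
My plan is to verify orthogonality by checking that every basis element of $\Lambda^2(P)$ is orthogonal to every basis element of $\Psi(P)$, and to do this using equation~\eqref{eq:Psi-inner-product-b} of Lemma~\ref{lemma:Psi-inner-product}.

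First I would fix an orthonormal basis $\{u, v, w\}$ for $P$. Then $\Lambda^2(P)$ has basis $\{u \w v, v \w w, w \w u\}$, and by the discussion preceding the statement, $\Psi(P)$ has basis $\{\Psi_{uv}, \Psi_{vw}, \Psi_{wu}\}$. Thus it suffices to show that the nine inner products of the form $\langle \Psi_{ab}, c \w d \rangle$, with $\{a,b\}$ and $\{c,d\}$ pairs chosen from $\{u,v,w\}$, all vanish.

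By~\eqref{eq:Psi-inner-product-b} we have $\langle \Psi_{ab}, c \w d \rangle = 2 \ps(a,b,c,d)$. But in each of the nine cases, the four arguments $a,b,c,d$ all lie in the three-element set $\{u,v,w\}$, so by pigeonhole at least two of them coincide. Since $\ps$ is a $4$-form, hence totally skew-symmetric, the evaluation vanishes. By bilinearity of the inner product, this extends to all of $\Lambda^2(P) \times \Psi(P)$, giving the desired orthogonality.

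I expect no real obstacle here: the proposition is essentially an immediate consequence of the identity~\eqref{eq:Psi-inner-product-b} together with the observation that $\ps$ cannot produce anything nonzero when fed four vectors from a $3$-dimensional subspace. Note in particular that the argument makes no use of $P$ being associative — the orthogonality of $\Lambda^2(P)$ and $\Psi(P)$ holds for an arbitrary $3$-plane, which is consistent with the hypothesis in the lemma statement.
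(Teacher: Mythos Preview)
Your proposal is correct and follows essentially the same argument as the paper: both fix an orthonormal basis of $P$, invoke~\eqref{eq:Psi-inner-product-b} on the nine pairs of basis elements, and observe that a repeated vector among the four inputs forces $\ps$ to vanish by skew-symmetry, concluding by bilinearity. Your phrasing via the pigeonhole principle is slightly more explicit than the paper's, and your closing remark that associativity of $P$ is not needed is a nice observation already reflected in the lemma's hypothesis.
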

\begin{proof}
Given any element of $\{ v \w w, w \w u, u \w v \}$ and any element of $\{ \frac{1}{2} \Psi_{vw}, \frac{1}{2} \Psi_{wu}, \frac{1}{2} \Psi_{uv} \}$, there will be at least one of $u, v, w$ that appears in both, so by~\eqref{eq:Psi-inner-product-b} and the skew-symmetry of $\ps$, these elements will be orthogonal. The result now follows by bilinearity.
\end{proof}

For a general $3$-plane $P$, we cannot say more than this. From now on, suppose that $P$ is \emph{associative}. In this case we can choose $\{ u, v, w \}$ so that
\begin{equation} \label{eq:P-assoc-basis}
\text{$v \times w = u$, $w \times u = v$, and $u \times v = w$.}
\end{equation}

\begin{prop} \label{prop:Psi-P-so3}
Let $P$ be an \emph{associative} $3$-plane in $\R^7$. Then the $3$-dimensional subspace $\Psi(P)$ of $\Lambda^2 (\R^7)$ is a Lie subalgebra, isomorphic to $\so(3)$.
\end{prop}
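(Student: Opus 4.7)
The plan is to apply the main commutator identity of Proposition~\ref{prop:main-identity} to pairs of the natural generators of $\Psi(P)$ and exploit the associative closure of $P$ under the cross product to conclude that each bracket lies back in $\Psi(P)$ with the structure constants of $\so(3)$.

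First I would fix an oriented orthonormal basis $\{u,v,w\}$ of $P$ satisfying~\eqref{eq:P-assoc-basis}; such a basis exists because $P$ is associative. By Lemma~\ref{lemma:Psi-inner-product}, the triple $\{\tfrac12\Psi_{vw},\tfrac12\Psi_{wu},\tfrac12\Psi_{uv}\}$ is an orthonormal basis of $\Psi(P)$, so it suffices to compute the three brackets of the form $[\Psi_{vw},\Psi_{wu}]$ and its cyclic rotations. Since $u,v,w$ are pairwise orthogonal, Proposition~\ref{prop:main-identity} applies once the common vector is moved into the second argument using the antisymmetry $\Psi_{ab}=-\Psi_{ba}$; writing $\Psi_{wu}=-\Psi_{uw}$ and invoking~\eqref{eq:main-identity} with common vector $w$ yields
\[
[\Psi_{vw},\Psi_{wu}] \;=\; |w|^2\Psi_{vu} \;+\; \Psi_{(v\times w)(u\times w)}.
\]
Associativity now enters decisively: by~\eqref{eq:P-assoc-basis} we have $v\times w = u$ and $u\times w = -v$, so the last term is $\Psi_{u(-v)} = -\Psi_{uv}$, and the bracket collapses to $-2\Psi_{uv}\in\Psi(P)$. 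The other two brackets follow by cyclic permutation of $u,v,w$, which preserves~\eqref{eq:P-assoc-basis}, giving $[\Psi_{wu},\Psi_{uv}]=-2\Psi_{vw}$ and $[\Psi_{uv},\Psi_{vw}]=-2\Psi_{wu}$. Hence $\Psi(P)$ is closed under the bracket.

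To identify the isomorphism type, I would pass to the renormalized basis $Y_1 = -\tfrac12\Psi_{vw}$, $Y_2 = -\tfrac12\Psi_{wu}$, $Y_3 = -\tfrac12\Psi_{uv}$; the computation above immediately gives $[Y_i,Y_j] = Y_k$ for $(i,j,k)$ a cyclic permutation of $(1,2,3)$, which are the defining relations of $\so(3)$. The only delicate point in executing the plan is keeping track of the two separate antisymmetries (of $\Psi_{ab}$ in its arguments and of the cross product), but once that bookkeeping is under control, the whole argument reduces to a single application of Proposition~\ref{prop:main-identity} together with the associative closure of $P$.
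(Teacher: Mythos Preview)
Your proof is correct and follows essentially the same route as the paper: fix an oriented orthonormal basis of $P$ satisfying~\eqref{eq:P-assoc-basis}, apply Proposition~\ref{prop:main-identity} to the generators, and use the associative closure to collapse the $\Psi_{(\,\cdot\,\times w)(\,\cdot\,\times w)}$ term. The only cosmetic difference is the sign convention you chose for the $Y_i$ (you negate all three, whereas the paper negates only $Y_3$); both yield the $\so(3)$ relations.
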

\begin{proof}
Using the key identity~\eqref{eq:main-identity} and~\eqref{eq:P-assoc-basis}, we compute
\begin{align*}
\big[ \tfrac{1}{2} \Psi_{vw}, \tfrac{1}{2} \Psi_{wu} \big] & = - \tfrac{1}{4} [ \Psi_{vw}, \Psi_{uw} ] = \tfrac{1}{4} [ \Psi_{uw}, \Psi_{vw} ] \\
& = \tfrac{1}{4} ( - |w|^2 \Psi_{uv} - \Psi_{(u \times w)(v \times w)} ) \\
& = - \tfrac{1}{4} (\Psi_{uv} + \Psi_{(-v)u}) \\
& = - \tfrac{1}{2} \Psi_{uv}.
\end{align*}
Cyclically permuting $u,v,w$, we deduce that
$$ \big[ \tfrac{1}{2} \Psi_{vw}, \tfrac{1}{2} \Psi_{wu} \big] = - \tfrac{1}{2} \Psi_{uv}, \qquad \big[ \tfrac{1}{2} \Psi_{wu}, \tfrac{1}{2} \Psi_{uv} \big] = - \tfrac{1}{2} \Psi_{vw}, \qquad \big[ \tfrac{1}{2} \Psi_{uv}, \tfrac{1}{2} \Psi_{vw} \big] = - \tfrac{1}{2} \Psi_{wu}. $$
If we define $Y_1 = \Psi_{vw}$, $Y_2 = \Psi_{wu}$, and $Y_3 = - \Psi_{uv}$, then the above says $[Y_i, Y_j] = Y_k$ for $i, j, k$ a cyclic permutation of $1,2,3$. Thus indeed $\Psi(P) = \mathrm{Span} \{ Y_1, Y_2, Y_3 \}$ is a Lie subalgebra of $\Lambda^2 (\R^7)$, isomorphic to $\so(3)$.
\end{proof}

\begin{rmk} \label{rmk:Psi-P-orientation}
If $P$ is given an orientation, so that $\{ u, v, w \}$ is an oriented orthonormal basis of $P$, then it naturally induces an orientation on $\Lambda^2 (P)$ such that $\{ v \w w, w \w u, u \w v \}$ is an oriented orthonormal basis of $\Lambda^2 (P)$. Then the linear isomorphism $\Psi \colon \Lambda^2 \to \Lambda^2$ maps $\Lambda^2 (P)$ onto $\Psi(P)$, inducing an orientation on $\Psi (P)$ such that $\{ \frac{1}{2} \Psi_{vw}, \frac{1}{2} \Psi_{wu}, \frac{1}{2} \Psi_{uv} \}$ is an oriented orthonormal basis of $\Psi(P)$. Because we had to take $Y_3 = - \frac{1}{2} \Psi_{uv}$ in the proof of Proposition~\ref{prop:Psi-P-so3}, the orientation on $\Psi(P)$ which gives the standard $\so(3)$ commutation relations is the opposite of the orientation induced on $\Psi(P)$ by the orientation on $P$.
\end{rmk}

Next we investigate the bracket between elements of the two $\so(3)$ subalgebras $\Lambda^2 (P)$ and $\Psi(P)$.

\begin{prop} \label{prop:bracket-Psi-P}
Let $P$ be an \emph{associative} $3$-plane in $\R^7$. Then the two $\so(3)$ subalgebras $\Lambda^2 (P)$ and $\Psi (P)$ of $\Lambda^2 (\R^7)$ commute. That is, if $X \in \Lambda^2 (P)$ and $Y \in \Psi (P)$, then $[X, Y] = 0$.
\end{prop}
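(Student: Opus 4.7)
The strategy is to realize both subalgebras as skew-adjoint operators on $\R^7$ and exploit that they have \emph{complementary supports}: elements of $\Lambda^2(P)$ map into $P$ and annihilate $P^\perp$, while the associativity of $P$ will force elements of $\Psi(P)$ to annihilate $P$ (and hence, being skew-adjoint, to map into $P^\perp$). Once these two facts are in hand, both composites $X \circ Y$ and $Y \circ X$ vanish automatically for $X \in \Lambda^2(P)$ and $Y \in \Psi(P)$, giving $[X,Y] = 0$ at the level of operators, and hence as $2$-forms.

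First I would dispatch the $\Lambda^2(P)$ side. For generators $a \w b$ with $a, b \in P$, equation~\eqref{eq:2form-map} gives $(a \w b)(z) = \langle a, z\rangle b - \langle b, z \rangle a \in \mathrm{span}\{a,b\} \subseteq P$ for every $z \in \R^7$, and this vanishes identically when $z \in P^\perp$. By bilinearity this extends to all $X \in \Lambda^2(P)$, so $\im X \subseteq P$ and $P^\perp \subseteq \ker X$.

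The more substantive step is the analogous statement for $\Psi(P)$. Using the component definition $(\Psi_{ab})_{ij} = a_p b_q \ps_{pqij}$ together with the ``last-index-raised'' interpretation of the vector-valued $3$-form $\ps$, a brief index chase identifies the vector $\Psi_{ab}(c)$ with $\ps(a,b,c)$. Here is where the hypothesis that $P$ is associative enters: by the characterization~\eqref{eq:assoc-character}, $\ps(a,b,c) = 0$ for all $a, b, c \in P$, so $\Psi_{ab}$ annihilates every vector in $P$. Skew-adjointness of $\Psi_{ab}$ then forces $\im \Psi_{ab} = (\ker \Psi_{ab})^\perp \subseteq P^\perp$, and by bilinearity this holds for every $Y \in \Psi(P)$.

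Combining the two observations, for $X \in \Lambda^2(P)$ and $Y \in \Psi(P)$ we have $Y \circ X = 0$ (since $X$ maps into $P$, which $Y$ annihilates) and $X \circ Y = 0$ (since $Y$ maps into $P^\perp$, which $X$ annihilates); hence $[X,Y] = 0$. The only step with any real content is the identification $\Psi_{ab}(c) = \ps(a,b,c)$, which is essentially a notational matter; after that,~\eqref{eq:assoc-character} does all the work, and I would not expect any genuine obstacle.
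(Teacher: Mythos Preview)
Your proof is correct, and in fact proves slightly more than the paper does. Both arguments hinge on exactly the same fact --- that $\Psi_{ab}(c) = \ps(a,b,c)$ as vectors, which vanishes for $a,b,c \in P$ by~\eqref{eq:assoc-character} --- but the paper simply expands $[u \w v, \Psi_{wy}]$ directly in coordinates and arrives at the $2$-form $u \w \ps(w,y,v) - v \w \ps(w,y,u)$, then observes the two $\ps$-terms vanish. You instead package the same vanishing as the geometric statement ``$\Psi(P)$ kills $P$ and $\Lambda^2(P)$ kills $P^{\perp}$'', from which both composites $X \circ Y$ and $Y \circ X$ are separately zero, not merely their difference. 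This complementary-support formulation is a cleaner way to see why the two $\so(3)$'s commute and makes the role of associativity more transparent; the paper's version is more hands-on but yields only $[X,Y] = 0$.
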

\begin{proof}
If $X \in \Lambda^2 (P)$ and $Y \in \Psi (P)$, then there exist $u, v, w, y \in P$ such that $X = u \w v$ and $Y = \Psi_{wy}$. Using~\eqref{eq:Psi-uv-coords}, we compute
\begin{align*}
[ X, Y ]_{ij} & = X_{ik} Y_{kj} - Y_{ik} X_{kj} \\
& = (u_i v_k - u_k v_i) (w_p y_q \ps_{pqkj} - (w_p y_q \ps_{pqik}) (u_k v_j - u_j v_k) \\
& = u_i \ps(w, y, v)_j - v_i \ps(w, y, u)_j + \ps(w, y, u)_i v_j - \ps(w, y, v)_i u_j \\
& = \big( u \w \ps(w, y, v) - v \w \ps(w, y, u) \big){}_{ij}.
\end{align*}
Since $P$ is associative, the above expression vanishes by~\eqref{eq:assoc-character}.
\end{proof}

\subsection{The $\so(4)$ subalgebra $\Theta(P)$ of an associative $3$-plane $P$} \label{sec:Theta-P}

Given an associative $3$-plane $P$ in $\R^7$, we define
\begin{equation} \label{eq:Theta-P-defn}
\Theta (P) = \Lambda^2 (P) \oplus \Psi (P).
\end{equation}
Note that $\Theta (P)$ is a $6$-dimensional linear subspace of $\Lambda^2 (\R^7)$, and a Lie subalgebra.

\begin{prop} \label{prop:so4-subalgebra}
Let $P \subseteq \R^7$ be a \emph{associative}. The orthogonal direct sum $\Theta (P) = \Lambda^2 (P) \oplus \Psi (P)$ is a Lie subalgebra of $\Lambda^2 (\R^7)$ isomorphic to $\so(4)$.
\end{prop}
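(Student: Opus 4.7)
The plan is to assemble $\Theta(P)$ as an internal direct sum of two commuting copies of $\so(3)$ and then invoke the classical isomorphism $\so(3) \oplus \so(3) \cong \so(4)$. All the pieces needed for this are already in place from the preceding subsection.

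First I would verify that $\Theta(P) = \Lambda^2(P) \oplus \Psi(P)$ really is a direct sum of vector spaces (hence $6$-dimensional), which is immediate from Lemma~\ref{lemma:Psi-P-orthogonal}, since the two summands are orthogonal inside $\Lambda^2(\R^7)$. Next I would check that $\Theta(P)$ is closed under the Lie bracket of $\so(7) = \Lambda^2(\R^7)$. Writing any two elements $A = X_1 + Y_1$ and $B = X_2 + Y_2$ with $X_i \in \Lambda^2(P)$ and $Y_i \in \Psi(P)$, bilinearity of the bracket gives
\begin{equation*}
[A, B] = [X_1, X_2] + [X_1, Y_2] + [Y_1, X_2] + [Y_1, Y_2].
\end{equation*}
The first term lies in $\Lambda^2(P)$ because $\Lambda^2(P)$ is an $\so(3)$ subalgebra (the opening discussion of Section~\ref{sec:Psi-P}). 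The last term lies in $\Psi(P)$ by Proposition~\ref{prop:Psi-P-so3}. The two mixed terms vanish by Proposition~\ref{prop:bracket-Psi-P}, which crucially uses the associativity of $P$. Hence $\Theta(P)$ is a Lie subalgebra, and as an abstract Lie algebra it is $\so(3) \oplus \so(3)$, the direct sum of the two commuting ideals $\Lambda^2(P)$ and $\Psi(P)$.

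Finally, I would identify this with $\so(4)$ using the standard fact that $\so(4) \cong \so(3) \oplus \so(3)$. One clean way is to recall that $\so(4) = \Lambda^2(\R^4) = \Lambda^2_+ \oplus \Lambda^2_-$ decomposes as self-dual and anti-self-dual $2$-forms, each of which is a copy of $\so(3)$, and that these two ideals commute. Sending $\Lambda^2_+ \to \Lambda^2(P)$ via the basis $\{X_1, X_2, X_3\} = \{v \w w, w \w u, u \w v\}$ and $\Lambda^2_- \to \Psi(P)$ via the basis $\{Y_1, Y_2, Y_3\} = \{\Psi_{vw}, \Psi_{wu}, -\Psi_{uv}\}$ from the proofs above yields a Lie algebra isomorphism, since each factor satisfies the standard $\so(3)$ commutation relations $[Z_i, Z_j] = Z_k$ and the two factors commute.

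No step is especially hard given the preceding results; the only subtlety is making sure to invoke associativity of $P$ at the correct place, namely when appealing to Proposition~\ref{prop:bracket-Psi-P} to kill the cross terms, as this is what distinguishes the associative case from a general $3$-plane and is what guarantees $\Theta(P)$ is a subalgebra rather than merely a linear subspace.
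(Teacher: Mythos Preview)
Your proposal is correct and follows essentially the same approach as the paper: the paper's proof simply cites Proposition~\ref{prop:bracket-Psi-P} and the well-known isomorphism $\so(4) \cong \so(3) \oplus \so(3)$, while you spell out the bracket closure argument more explicitly. One small refinement: associativity of $P$ is also used in Proposition~\ref{prop:Psi-P-so3} (to get $\Psi(P)$ closed under the bracket), not only in Proposition~\ref{prop:bracket-Psi-P}, so your closing remark slightly understates where associativity enters.
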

\begin{proof}
This follows from Proposition~\ref{prop:bracket-Psi-P} and the well-known isomorphism $\so(4) \cong \so(3) \oplus \so(3)$, where the right hand side is a direct sum of Lie algebras, meaning that they commute.
\end{proof}

We have shown that any associative $3$-plane $P$ in $\R^7$ gives rise to an $\so(4)$ subalgebra $\Theta(P)$ of $\Lambda^2$. It is natural to investigate the relations between $\Theta(P)$ and $\Theta(Q)$ if $P, Q$ are two distinct associative $3$-planes. We first establish some preliminary results, which conclude with an alternative characterization of $\Theta(P)$ in Corollary~\ref{cor:Theta-P-alt} below.

\begin{lemma} \label{lemma:Theta-P-7}
Let $P$ be an associative $3$-plane. Then $\Theta(P) \cap \Lambda^2_7 = P \hk \ph := \{ u \hk \ph \mid u \in P \}$.
\end{lemma}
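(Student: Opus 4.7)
The plan is to prove both inclusions directly, using the decomposition $\Theta(P) = \Lambda^2(P) \oplus \Psi(P)$ together with the identity~\eqref{eq:Psi-identity} and the projection formula~\eqref{eq:pi7-vw}. I would begin by fixing an oriented orthonormal basis $\{e_1, e_2, e_3\}$ of $P$ with $e_i \times e_j = e_k$ for cyclic $(i,j,k)$; such a basis exists because $P$ is associative.

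For the inclusion $P \hk \ph \subseteq \Theta(P) \cap \Lambda^2_7$, the membership $u \hk \ph \in \Lambda^2_7$ is immediate from~\eqref{eq:27-first}, so the task reduces to showing $u \hk \ph \in \Theta(P)$. I would write $u = \sum_i u_i e_i$, express each $e_i$ as $e_j \times e_k$ with $(i,j,k)$ cyclic, and apply~\eqref{eq:Psi-identity} in the form $(e_j \times e_k) \hk \ph = e_j \w e_k - \Psi_{e_j e_k}$ to each term. This immediately exhibits $u \hk \ph$ as a sum of an element of $\Lambda^2(P)$ and an element of $\Psi(P)$, hence of $\Theta(P)$.

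For the reverse inclusion, the key observation I would use is that $\pi_7(\Theta(P)) \subseteq P \hk \ph$. On a generator $e_i \w e_j$ of $\Lambda^2(P)$, equation~\eqref{eq:pi7-vw} gives $\pi_7(e_i \w e_j) = \tfrac{1}{3}(e_i \times e_j) \hk \ph$, which lies in $P \hk \ph$ precisely because $P$ is \emph{associative} and thus $e_i \times e_j \in P$. On a generator $\Psi_{e_i e_j}$ of $\Psi(P)$, I would use equation~\eqref{eq:Psi-uv} to compute $\pi_7(\Psi_{e_i e_j}) = -2 \pi_7(e_i \w e_j) = -\tfrac{2}{3}(e_i \times e_j) \hk \ph$, which is again in $P \hk \ph$ by associativity. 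Given any $Z \in \Theta(P) \cap \Lambda^2_7$, we then have $Z = \pi_7(Z) \in P \hk \ph$ by linearity, finishing the proof.

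I do not anticipate any serious obstacle. The only subtle point worth flagging is that the associativity of $P$ is doing essential work in both directions: in the forward direction it produces the basis satisfying $e_i \times e_j = e_k$ needed to feed~\eqref{eq:Psi-identity}, and in the reverse direction it is precisely what confines $\pi_7$ of both $\Lambda^2(P)$ and $\Psi(P)$ into the $3$-dimensional subspace $P \hk \ph$ rather than all of $\Lambda^2_7$.
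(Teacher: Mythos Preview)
Your proof is correct, but it takes a somewhat different route from the paper's. The paper does not argue by separate inclusions; instead it exhibits an explicit basis of $\Theta(P)$ adapted to the splitting $\Lambda^2 = \Lambda^2_7 \oplus \Lambda^2_{14}$. Specifically, it observes via~\eqref{eq:77-summary} that the six elements $\pi_7([e_i \hk \ph, e_j \hk \ph])$ and $\pi_{14}([e_i \hk \ph, e_j \hk \ph])$ (for $i \neq j$) are linear combinations of the standard basis $\{e_i \w e_j, \Psi_{e_i e_j}\}$ of $\Theta(P)$, and hence themselves form a basis. The $\Lambda^2_7$ part is then read off immediately as the span of the first three, which by~\eqref{eq:pi7-of-77} is exactly $P \hk \ph$.

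Your approach is more elementary in that it avoids the Lie bracket computation entirely, working instead with~\eqref{eq:Psi-identity} for the forward inclusion and the projection formulas~\eqref{eq:pi7-vw} and~\eqref{eq:Psi-uv} for the reverse. This makes your argument more self-contained, though the paper's version has the advantage of producing an explicit $\Lambda^2_7 \oplus \Lambda^2_{14}$-adapted basis of $\Theta(P)$ as a byproduct. Both arguments rely on the associativity of $P$ at essentially the same point, namely to ensure that the cross products $e_i \times e_j$ remain in $P$.
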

\begin{proof}
Let $\{ u, v, w \}$ be an orthonormal basis of $P$ with $w = u \times v$. We saw in Section~\ref{sec:Psi-P} that
$$ \{ v \w w, w \w u, u \w v, \Psi_{vw}, \Psi_{wu}, \Psi_{uv} \} $$
is a basis of $\Theta(P) = \Lambda^2(P) \oplus \Psi(P)$. It then follows immediately from~\eqref{eq:77-summary} that
\begin{align*}
& \{ \pi_7( [v \hk \ph, w \hk \ph]),  \pi_7( [w \hk \ph, u \hk \ph]), \pi_7( [u \hk \ph, v \hk \ph]) \} \\
& \quad \cup \, \{ \pi_{14} ( [v \hk \ph, w \hk \ph]),  \pi_{14} ( [w \hk \ph, u \hk \ph]),  \pi_{14} ( [u \hk \ph, v \hk \ph]) \}
\end{align*}
is a basis for $\Theta(P)$. Thus, using~\eqref{eq:pi7-of-77}, we have
\begin{align*}
\Theta(P) \cap \Lambda^2_7 & = \mathrm{Span} \{ \pi_7( [v \hk \ph, w \hk \ph]),  \pi_7( [w \hk \ph, u \hk \ph]),  \pi_7( [u \hk \ph, v \hk \ph]) \} \\
& = \mathrm{Span} \{ (v \times w) \hk \ph, (w \times u) \hk \ph, (u \times v) \hk \ph \} \\
& = \mathrm{Span} \{ u \hk \ph, v \hk \ph, w \hk \ph \},
\end{align*}
which equals $P \hk \ph$.
\end{proof}

\begin{lemma} \label{lemma:proper-V-intersec}
Let $V \subseteq \R^7$ be a subspace. Let $V \hk \ph = \{ v \hk \ph \mid v \in V \}$, which is a subspace of $\Lambda^2 (\R^7)$ of the same dimension as $V$, since $v \mapsto v \hk \ph$ is injective. We also have the subspace $\Lambda^2 (V)$ of $\Lambda^2 (\R^7)$. Thus $\Lambda^2 (V) \cap (V \hk \ph)$ is a subspace of $\Lambda^2 (\R^7)$. There are two cases:
\begin{itemize}
\item If $V = \R^7$, then $\Lambda^2 (V) \cap (V \hk \ph) = \Lambda^2_7$.
\item If $V$ is a \emph{proper} subspace, then $\Lambda^2 (V) \cap (V \hk \ph) = \{ 0 \}$.
\end{itemize}
\end{lemma}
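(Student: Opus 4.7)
The first case is immediate from the definitions, so the plan is to spend the real work on the second. For $V = \R^7$, the identity $\Lambda^2(V) = \Lambda^2(\R^7)$ reduces the intersection to $V \hk \ph = \R^7 \hk \ph$, which is exactly $\Lambda^2_7$ by~\eqref{eq:27-first}.

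For the proper case, the plan is to take an arbitrary $\beta \in \Lambda^2(V) \cap (V \hk \ph)$, write $\beta = v \hk \ph$ with $v \in V$, and show $v = 0$. The key translation step is to convert the condition ``$\beta \in \Lambda^2(V)$'' into a concrete statement about the associated skew-adjoint operator: using~\eqref{eq:2form-map}, one sees that the operator corresponding to a decomposable $2$-form supported on $V$ annihilates $V^\perp$, and by bilinearity this persists for any $\beta \in \Lambda^2(V)$. On the other hand, the operator associated to $v \hk \ph$ is $w \mapsto v \times w$ (from the computation leading to~\eqref{eq:cross-frame}). Combining these, the condition becomes $v \times w = 0$ for every $w \in V^\perp$.

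Next I would invoke the kernel structure of the cross product with a fixed nonzero vector: if $v \neq 0$, then the kernel of $w \mapsto v \times w$ is exactly $\mathrm{Span}(v)$. This is a one-line consequence of $|v \times w|^2 = |v|^2 |w|^2 - \langle v, w\rangle^2$, and can also be read off from Corollary~\ref{cor:canonical-form-7}, which says $v \hk \ph$ has rank $6$, so its one-dimensional kernel must be $\mathrm{Span}(v)$. Assuming $v \neq 0$, we would then deduce $V^\perp \subseteq \mathrm{Span}(v) \subseteq V$, forcing $V^\perp = V \cap V^\perp = \{0\}$ and hence $V = \R^7$, contradicting the hypothesis that $V$ is proper. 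Therefore $v = 0$ and $\beta = 0$.

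There is no real obstacle here. The only step requiring care is the identification of $\Lambda^2(V) \subseteq \Lambda^2(\R^7)$ with the skew-adjoint operators that kill $V^\perp$, and after that the argument is a short kernel calculation using the cross product.
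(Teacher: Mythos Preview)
Your proof is correct and follows essentially the same route as the paper. Both arguments convert ``$\beta \in \Lambda^2(V)$'' into the operator statement that $\beta$ annihilates $V^\perp$, and then exploit a cross product identity; the only cosmetic difference is that the paper picks a single nonzero $w \in V^\perp$ and computes $w \times (w \hk X) = w \times (v \times w) = |w|^2 v$ via~\eqref{eq:iterated-cross-same} to force $v=0$ directly, whereas you phrase it as a kernel argument using $|v \times w|^2 = |v \w w|^2$.
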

\begin{proof}
The first statement is just~\eqref{eq:27-first}. Suppose $V$ is proper, so $V^{\perp} \neq \{ 0 \}$. Let $w \in V^{\perp}$ be \emph{nonzero}. Suppose $X \in \Lambda^2 (V) \cap (V \hk \ph)$. Then $X = v \hk \ph$ for some $v \in V$. Since $\langle v, w \rangle = 0$, by~\eqref{eq:cross-frame} and~\eqref{eq:iterated-cross-same} we have
$$ w \times (w \hk X) = w \times (w \hk v \hk \ph) = w \times (v \times w) = |w|^2 v. $$
But if $X \in \Lambda^2 (V)$, then for $w \in V^{\perp}$ we have $w \hk X = 0$, so since $w \neq 0$, the above gives $v = 0$. Therefore $X = 0$, and hence indeed $\Lambda^2 (V) \cap (V \hk \ph) = \{ 0 \}$.
\end{proof}

The above lemmas give us an alternative characterization of the $\so(4)$ subalgebra $\Theta(P)$ as follows.

\begin{cor} \label{cor:Theta-P-alt}
Let $P$ be an associative $3$-plane. Then $\Theta(P) = \Lambda^2 (P) \oplus (P \hk \ph)$. Note that this direct sum is \emph{not} orthogonal.
\end{cor}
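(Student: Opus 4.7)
The plan is to show two containments that together with a dimension count give the claimed decomposition, and then exhibit explicit non-orthogonal elements.

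First I would verify that $\Lambda^2(P) + (P \hk \ph)$ is a subspace of $\Theta(P)$. The inclusion $\Lambda^2(P) \subseteq \Theta(P)$ holds by the very definition~\eqref{eq:Theta-P-defn}. For $P \hk \ph \subseteq \Theta(P)$, Lemma~\ref{lemma:Theta-P-7} already gives us the stronger statement $\Theta(P) \cap \Lambda^2_7 = P \hk \ph$, so in particular $P \hk \ph$ sits inside $\Theta(P)$.

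Next I would argue the sum $\Lambda^2(P) + (P \hk \ph)$ is direct and has dimension $6$. Since $P$ is a \emph{proper} subspace of $\R^7$, Lemma~\ref{lemma:proper-V-intersec} yields $\Lambda^2(P) \cap (P \hk \ph) = \{0\}$. Because $\dim \Lambda^2(P) = 3$ and the map $v \mapsto v \hk \ph$ is injective (so $\dim (P \hk \ph) = 3$), the sum is direct of dimension $6$. But $\Theta(P)$ is also $6$-dimensional by Proposition~\ref{prop:so4-subalgebra}, so the containment $\Lambda^2(P) \oplus (P \hk \ph) \subseteq \Theta(P)$ must be an equality.

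Finally, to see that this direct sum is \emph{not} orthogonal, I would take an oriented orthonormal basis $\{u, v, w\}$ of $P$ with $u \times v = w$, and compute the pairing between $v \w w \in \Lambda^2(P)$ and $u \hk \ph \in P \hk \ph$ using~\eqref{eq:ph-cross}:
\begin{equation*}
\langle v \w w, u \hk \ph \rangle = (u \hk \ph)(v, w) = \ph(u, v, w) = \langle u \times v, w \rangle = |w|^2 = 1 \neq 0.
\end{equation*}
This is an entirely routine plan; the only genuine input is bundling Lemma~\ref{lemma:Theta-P-7} and Lemma~\ref{lemma:proper-V-intersec} correctly, and there is no real obstacle once those are in hand.
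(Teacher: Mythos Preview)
Your argument is correct and follows essentially the same route as the paper: invoke Lemma~\ref{lemma:Theta-P-7} to get $P \hk \ph \subseteq \Theta(P)$, invoke Lemma~\ref{lemma:proper-V-intersec} for trivial intersection, and finish by the dimension count. Your explicit verification that $\langle v \w w, u \hk \ph \rangle = \ph(u,v,w) = 1$ is a nice addition, since the paper merely asserts non-orthogonality without computing it.
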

\begin{proof}
Recall that $\Theta(P) = \Lambda^2 (P) \oplus \Psi(P)$ is $6$-dimensional. Both $\Lambda^2 (P)$ and $(P \hk \ph)$ are $3$-dimensional, and by Lemma~\ref{lemma:proper-V-intersec} they have trivial intersection. Moreover, Lemma~\ref{lemma:Theta-P-7} shows that $P \hk \ph \subseteq \Theta(P)$. Thus the assertion follows.
\end{proof}

Now suppose that $P$ and $Q$ are both associative $3$-planes. If $P \neq Q$, then the intersection $P \cap Q$ can only have dimension $0$ or $1$, because they are both closed under the cross product, so if they have two orthonormal vectors in common, then they must have three.

\begin{prop} \label{prop:PQ-in7}
Let $P, Q$ be \emph{distinct} associative $3$-planes. Then $\Theta(P) \cap \Theta(Q) \subseteq \Lambda^2_7$.
\end{prop}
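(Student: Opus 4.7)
The plan is to use the alternative decomposition $\Theta(P) = \Lambda^2(P) \oplus (P \hk \ph)$ from Corollary~\ref{cor:Theta-P-alt} (applied to both $P$ and $Q$) and reduce the problem to an elementary cross-product calculation. Given $X \in \Theta(P) \cap \Theta(Q)$, I would write
\[ X = A + u \hk \ph = B + v \hk \ph \]
with $A \in \Lambda^2(P)$, $u \in P$, $B \in \Lambda^2(Q)$, $v \in Q$; each decomposition is unique by Lemma~\ref{lemma:proper-V-intersec}. Subtracting yields $A - B = (v-u) \hk \ph$, which as a linear operator reads $A(w) - B(w) = (v-u) \times w$ for every $w \in \R^7$.

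The key step is to restrict this identity to $w \in P^\perp \cap Q^\perp$. From the action formula~\eqref{eq:2form-map}, any element of $\Lambda^2(P)$ annihilates $P^\perp$, and similarly $B$ annihilates $Q^\perp$; hence $(v-u) \times w = 0$ for every $w \in P^\perp \cap Q^\perp = (P+Q)^\perp$. Since $P \neq Q$ are both closed under the cross product, their intersection has dimension $0$ or $1$ (two orthonormal vectors in $P \cap Q$ would, by closure, span a common associative $3$-plane), so $\dim (P+Q)^\perp = 1 + \dim(P \cap Q) \geq 1$.

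I would then conclude that $u = v$. Picking any nonzero $w \in (P+Q)^\perp$, the standard identity $|a \times b|^2 = |a|^2 |b|^2 - \langle a, b \rangle^2$ forces $v-u$ to be parallel to $w$; but $v - u \in P+Q$ is orthogonal to $w$, so $v - u = 0$. Thus $u = v \in P \cap Q$ and $A = B$. Since $A \in \Lambda^2(P)$ annihilates $P^\perp$ and $B \in \Lambda^2(Q)$ annihilates $Q^\perp$, the common value $A = B$ annihilates $P^\perp + Q^\perp = (P \cap Q)^\perp$, and so lies in $\Lambda^2(P \cap Q)$, which is trivial because $\dim(P \cap Q) \leq 1$. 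Hence $X = u \hk \ph$ with $u \in P \cap Q$, and by~\eqref{eq:27-first} we conclude $X \in \Lambda^2_7$.

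The main obstacle I foresee is conceptual rather than computational: noticing that evaluating the identity $A - B = (v-u) \hk \ph$ on $(P+Q)^\perp$ simultaneously kills both $A$ and $B$ and collapses the entire intersection problem to the single scalar equation $(v-u) \times w = 0$ for $w$ in a subspace of positive dimension. Once this trick is spotted, the remainder is a short dimension count together with the cross-product norm identity.
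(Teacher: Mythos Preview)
Your proof is correct and follows essentially the same route as the paper: both arguments use the decomposition of Corollary~\ref{cor:Theta-P-alt} and exploit a nonzero $w \in (P+Q)^{\perp}$ to separate the $\Lambda^2$-part from the $\hk\ph$-part. The paper packages this step as a single invocation of Lemma~\ref{lemma:proper-V-intersec} with $V = P+Q$ (giving $u_1\wedge u_2 = v_1\wedge v_2$ immediately, then arguing by contrapositive), whereas you inline the cross-product calculation directly. A small bonus of your version is that it yields the stronger conclusion $X = u\hk\ph$ with $u\in P\cap Q$ in one stroke, which the paper establishes separately as Corollary~\ref{cor:PQ-final}.
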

\begin{proof}
We prove the contrapositive. Let $X \in \Theta(P) \cap \Theta(Q)$. By Corollary~\ref{cor:Theta-P-alt}, there exists $u_1, u_2, u_3 \in P$ and $v_1, v_2, v_3 \in Q$ such that
\begin{equation} \label{eq:PQ-in7-temp}
X = u_1 \w u_2 + u_3 \hk \ph = v_1 \w v_2 + v_3 \hk \ph.
\end{equation}
Let $V = P + Q$, which is at most $6$-dimensional and thus a proper subspace. By Lemma~\ref{lemma:proper-V-intersec}, we have $X \in \Lambda^2 (V) \oplus (V \hk \ph)$. Thus we deduce from~\eqref{eq:PQ-in7-temp} that $u_1 \w u_2 = v_1 \w v_2$. If $X \notin \Lambda^2_7$, then $u_1 \w u_2 = v_1 \w v_2$ is nonzero, and hence $\mathrm{Span} \{ u_1, u_2 \} \subseteq P$ equals $\mathrm{Span} \{ v_1, v_2 \} \subseteq Q$, so $P = Q$ as explained immediately before the proposition.
\end{proof}

We can now completely describe the intersection $\Theta(P) \cap \Theta(Q)$ when $P \neq Q$.

\begin{cor} \label{cor:PQ-final}
Let $P, Q$ be \emph{distinct} associative $3$-planes. Then $\Theta(P) \cap \Theta(Q) = (P \cap Q) \hk \ph$.
\end{cor}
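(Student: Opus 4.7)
The plan is to combine three ingredients already established: Proposition~\ref{prop:PQ-in7}, Lemma~\ref{lemma:Theta-P-7}, and the injectivity of the map $v \mapsto v \hk \ph$ (noted in Lemma~\ref{lemma:proper-V-intersec}). The structure is essentially: first trap $\Theta(P) \cap \Theta(Q)$ inside $\Lambda^2_7$, then use the explicit description of $\Theta(P) \cap \Lambda^2_7$, and finally reduce an intersection in $\Lambda^2_7$ to an intersection in $\R^7$.

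For the forward inclusion, I would start with $X \in \Theta(P) \cap \Theta(Q)$. By Proposition~\ref{prop:PQ-in7}, $X \in \Lambda^2_7$. Since $X$ lies in $\Theta(P) \cap \Lambda^2_7$ and in $\Theta(Q) \cap \Lambda^2_7$, Lemma~\ref{lemma:Theta-P-7} gives $X \in (P \hk \ph) \cap (Q \hk \ph)$. So there exist $u \in P$ and $v \in Q$ with $X = u \hk \ph = v \hk \ph$. Injectivity of $w \mapsto w \hk \ph$ forces $u = v$, so $u \in P \cap Q$, and therefore $X \in (P \cap Q) \hk \ph$.

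The reverse inclusion is immediate: if $u \in P \cap Q$, then $u \hk \ph \in P \hk \ph \subseteq \Theta(P)$ and $u \hk \ph \in Q \hk \ph \subseteq \Theta(Q)$ by Lemma~\ref{lemma:Theta-P-7}, so $u \hk \ph \in \Theta(P) \cap \Theta(Q)$.

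There is no real obstacle here; the work has already been done in the preceding propositions and lemmas. The only point that requires any care is invoking injectivity of $w \mapsto w \hk \ph$ to conclude $u = v$ from $u \hk \ph = v \hk \ph$, but this is transparent from \eqref{eq:cross-frame}, which shows that $w \hk \ph$ recovers $w$ (for instance, contracting twice with any unit vector orthogonal to $w$ and using \eqref{eq:iterated-cross-same}, as in the proof of Lemma~\ref{lemma:proper-V-intersec}).
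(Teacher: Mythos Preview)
Your proof is correct and follows essentially the same approach as the paper: use Proposition~\ref{prop:PQ-in7} to trap the intersection in $\Lambda^2_7$, apply Lemma~\ref{lemma:Theta-P-7} to identify $\Theta(P)\cap\Lambda^2_7 = P \hk \ph$ (and likewise for $Q$), and then use injectivity of $v \mapsto v \hk \ph$ to reduce to $(P\cap Q)\hk\ph$. The paper presents this as a single chain of equalities rather than two inclusions, but the content is identical.
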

\begin{proof}
Using Proposition~\ref{prop:PQ-in7} and Lemma~\ref{lemma:Theta-P-7}, we have
\begin{align*}
\Theta(P) \cap \Theta(Q) & = \Theta(P) \cap \Theta(Q) \cap \Lambda^2_7 \\
& = (\Theta(P) \cap \Lambda^2_7) \cap (\Theta(Q) \cap \Lambda^2_7) \\
& = (P \hk \ph) \cap (Q \hk \ph).
\end{align*}
But since $v \mapsto v \hk \ph$ is injective, we have $(P \hk \ph) \cap (Q \hk \ph) = (P \cap Q) \hk \ph$.
\end{proof}

Putting together all of the results in this section, we have established the following.

\begin{thm} \label{thm:so4-final}
Let $P$ be an associative $3$-plane. Then $\Theta(P) = \Lambda^2 (P) \oplus \Psi(P)$ is a Lie subalgebra of $\so(7) = \Lambda^2 (\R^7)$ isomorphic to $\so(4)$. Moreover, the direct sum is orthogonal. We can also describe $\Theta(P)$ as the \emph{non-orthogonal} direct sum $\Theta(P) = \Lambda^2 (P) \oplus (P \hk \ph)$. Finally, if $P, Q$ are \emph{distinct} associative $3$-planes, then the intersection $\Theta(P) \cap \Theta(Q)$ is the Lie subalgebra
\begin{itemize}
\item $\Theta(P) \cap \Theta(Q) = \{ 0 \}$ if $P \cap Q = \{ 0 \}$,
\item $\Theta(P) \cap \Theta(Q) = \mathrm{Span} \{ v \hk \ph \} \cong \so(2)$ if $P \cap Q$ is $1$-dimensional with orthonormal basis $\{ v \}$.
\end{itemize}
\end{thm}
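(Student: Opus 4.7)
The plan is to assemble Theorem \ref{thm:so4-final} from the pieces already established in Section \ref{sec:assoc-so4}, since the theorem functions as a summary statement for that section rather than introducing genuinely new content. The proof will essentially be a cross-referencing exercise, so the main task is to sequence the ingredients correctly and point out one small extra observation about dimensions of intersections of associative subspaces.

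First, for the subalgebra structure and the two direct sum decompositions, I would invoke Proposition \ref{prop:so4-subalgebra} to deduce that $\Theta(P) = \Lambda^2(P) \oplus \Psi(P)$ is a Lie subalgebra of $\so(7) = \Lambda^2(\R^7)$ isomorphic to $\so(4)$. The orthogonality of this particular direct sum is exactly the content of Lemma \ref{lemma:Psi-P-orthogonal}. For the alternative non-orthogonal decomposition $\Theta(P) = \Lambda^2(P) \oplus (P \hk \ph)$, I would cite Corollary \ref{cor:Theta-P-alt}, which in turn rests on Lemma \ref{lemma:Theta-P-7} (showing $P \hk \ph \subseteq \Theta(P)$) and Lemma \ref{lemma:proper-V-intersec} (showing that the summands have trivial intersection because $P$ is a proper subspace of $\R^7$).

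Second, for the intersection $\Theta(P) \cap \Theta(Q)$ when $P \neq Q$, I would apply Corollary \ref{cor:PQ-final} to get $\Theta(P) \cap \Theta(Q) = (P \cap Q) \hk \ph$, which itself relies on Proposition \ref{prop:PQ-in7} saying that the intersection lies in $\Lambda^2_7$. The remaining step is the short observation recorded just before Proposition \ref{prop:PQ-in7}: since both $P$ and $Q$ are closed under the cross product, if they contained two common orthonormal vectors $u, v$, then $u \times v$ would lie in both, forcing $P = Q$. Hence when $P \neq Q$ we have $\dim(P \cap Q) \in \{0, 1\}$, giving the two bulleted cases. In the $1$-dimensional case, $\mathrm{Span}\{v \hk \ph\}$ is a $1$-dimensional abelian Lie subalgebra of $\Lambda^2 (\R^7)$, which is isomorphic to $\so(2)$.

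The main obstacle to this theorem is not in the assembly step above but was absorbed into the preceding development. The substantive inputs are the remarkable commutator identity of Proposition \ref{prop:main-identity} that powers Proposition \ref{prop:Psi-P-so3}, the vanishing argument in Proposition \ref{prop:bracket-Psi-P} using the associative characterization \eqref{eq:assoc-character}, and the use of \eqref{eq:iterated-cross-same} in Lemma \ref{lemma:proper-V-intersec} to separate the $\Lambda^2(V)$ and $V \hk \ph$ contributions. Once these are in hand, the proof of Theorem \ref{thm:so4-final} itself is a matter of citing the right corollaries and noting the dichotomy in $\dim(P \cap Q)$.
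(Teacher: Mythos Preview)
Your proposal is correct and matches the paper's approach exactly: the paper presents Theorem~\ref{thm:so4-final} as a summary statement with no separate proof, introduced by the phrase ``Putting together all of the results in this section, we have established the following.'' Your assembly of Proposition~\ref{prop:so4-subalgebra}, Lemma~\ref{lemma:Psi-P-orthogonal}, Corollary~\ref{cor:Theta-P-alt}, Corollary~\ref{cor:PQ-final}, and the dimension observation preceding Proposition~\ref{prop:PQ-in7} is precisely the intended synthesis.
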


Note that in the second case of Theorem~\ref{thm:so4-final}, $v \hk \ph$ generates the element of $\SO{7}$ which acts as follows. Let $L = \mathrm{Span} \{ v \}$. There are orthogonal decompositions $P =L \oplus \wt P$, $Q = L \oplus \wt Q$, and $\R^7 = L \oplus \wt P \oplus \wt Q \oplus U$. Then $\exp (v \hk \ph)$ fixes $L$ and rotates each of $\wt P$, $\wt Q$, and $U$ as described by exponentiating~\eqref{eq:canonical-form-7}.

\begin{rmk} \label{rmk:different-so4}
There are two other well-known ways to obtain an $\so(4)$ subalgebra given an associative $3$-plane $P$ in $\R^7$. The first is as follows. A  choice of associative $3$-plane $P$ determines an orthogonal splitting $\R^7 = P \oplus P^{\perp}$, and since $P$ is associative we can identify $\R^7 = \imag \Oc$ with $\imag \Qu \oplus \Qu$. Then $\SO{4} = \{ (p,q) \in \Qu \times \Qu \mid |p| = |q| = 1 \} / \{\pm (1,1)\}$ acts on $\R^7 = \imag \Qu \oplus \Qu$ by
$$ (p,q) (v, a) = (p v \overline{p}, p a \overline{q}). $$
(See Harvey--Lawson~\cite{HL} for more details.) One can show that this gives an embedding of $\SO{4}$ in $\G$, and thus an inclusion of Lie algebras $\so(4) \subseteq \lieg$. However, the $\so(4)$ subalgebra $\Theta(P)$ constructed in this section from an associative $3$-plane $P$ is \emph{not} the same, because $\Theta(P)$ does not lie entirely in $\lieg$, but rather lies in the larger Lie algebra $\so(7) = \Lambda^2_7 \oplus \lieg$.

Another possibility is to take the orthogonal complement $P^{\perp}$, which is a \emph{coassociative} subspace, and to consider $\Lambda^2 (P^{\perp}) = \Lambda^2_+ (P^{\perp}) \oplus \Lambda^2_- (P^{\perp})$, which is isomorphic to $\so(4) = \so(3) \oplus \so(3)$ and is a subspace of $\so(7) = \Lambda^2 (\R^7)$. However, this $\so(4)$ algebra is also \emph{not} the same as $\Theta(P)$. One can check directly that $\Lambda^2_+ (P^{\perp})$ is contained in $\lieg = \Lambda^2_{14}$, whereas neither of the $\so(3)$ subalgebras in $\Theta(P) = \Lambda^2(P) \oplus \Psi(P)$ are contained in $\lieg$.
\end{rmk}

\addcontentsline{toc}{section}{References}

\end{document}